\newcommand {\Kinf}{\ensuremath{\mathcal{K}_\infty}}
\newcommand {\K}{\ensuremath{\mathcal{K}}}
\newcommand {\Real}{\ensuremath{{\mathbb{R}}}}
\newcommand{\C}{\ensuremath{\mathcal C}}
\newcommand{\R}{\ensuremath{\mathcal R}}
\newcommand{\D}{\ensuremath{\mathcal D}}
\newcommand{\A}{\ensuremath{\mathcal A}}
\newcommand{\I}{\ensuremath{\mathcal I}}
\newcommand{\setS}{\ensuremath{\mathcal S}}
\newcommand{\setH}{\ensuremath{\mathcal H}}
\newcommand{\setE}{\ensuremath{\mathcal E}}
\newcommand{\G}{\ensuremath{\mathcal G}}
\newcommand{\N}{\ensuremath{\mathcal N}}
\newcommand{\W}{\ensuremath{\mathcal W}}
\newcommand{\B}{\ensuremath{\mathcal B}}
\renewcommand{\ss}{\scriptscriptstyle}
\newcommand{\ellc}{\ensuremath{\ell^{\ss \circlearrowleft}}}
\newcommand{\ellcbar}{\ensuremath{\bar\ell^{\ss \circlearrowleft}}}
\newcommand{\hc}{\ensuremath{h^{\ss \circlearrowleft}}}
\newcommand{\hcbar}{\ensuremath{\bar h^{\ss \circlearrowleft}}}
\newcommand{\co}{{\mbox{\rm co}}}
\newtheorem{theorem}{Theorem}
\newtheorem{lemma}{Lemma}
\newtheorem{claim}{Claim}
\newtheorem{definition}{Definition}
\newtheorem{remark}{Remark}
\newenvironment{proof}{\noindent {\bf Proof.}}{\hfill \hspace*{1pt}\hfill$\blacksquare$}
\begin{document}

\title{Synchronization analysis of coupled planar oscillators
by averaging}
\author{S. Emre Tuna\\
{\em \small{Middle East Technical University, Ankara, Turkey}}\\
{\tt \small tuna@eee.metu.edu.tr}}
\maketitle

\begin{abstract}
Sufficient conditions for synchronization of coupled Lienard-type
oscillators are investigated via averaging technique. Coupling
considered here is pairwise, unidirectional, and described by a
nonlinear function (whose graph resides in the first and third
quadrants) of some projection of the relative distance (between
the states of the pair being coupled) vector. Under the assumption
that the interconnection topology defines a connected graph, it is
shown that the solutions of oscillators can be made converge
arbitrarily close to each other, while let initially be
arbitrarily far apart, provided that the frequency of oscillations
is large enough and the initial phases of oscillators all lie in
an open semicircle. It is also shown that (almost) synchronized
oscillations always take place at some fixed magnitude independent
of the initial conditions. Similar results are generated for
nonlinearly-coupled harmonic oscillators.
\end{abstract}

\section{Introduction}
Synchronization in coupled dynamical systems has been a common
ground of investigation for researchers from different
disciplines. Most of the work in the area studies the case where
the coupling between individual systems is linear; see, for
instance, \cite{wu95,pecora98,pogromsky02,wu05,belykh06,liu08}.
Nonlinear coupling is also of interest since certain phenomena
cannot be properly modelled by linear coupling. A particular
system exemplifying nonlinear coupling that attracted much
attention is Kuramoto model and its like
\cite{bonilla98,pikovsky09}. Among more general results allowing
nonlinear coupling are \cite{arcak07,stan07} where passivity
theory is employed to obtain sufficient conditions for
synchronization under certain symmetry or balancedness assumptions
on the coupling graph.

In this paper we study the synchronization behavior of an array of
nonlinear planar oscillators. We let the individual oscillators
share identical dynamics and the coupling between them be
nonlinear. We consider Lienard-type oscillators that have been
much studied due to their close relation to real-life systems and
applications \cite{elrady04,santos08,slight08}. A particular
example is van der Pol oscillator \cite{guckenheimer97,murali93}.

What we investigate here is the {\em relation between frequency of
oscillations and synchronization} of the oscillators forming the
array. The array is formed such that some of the oscillators are
coupled to some others via a nonlinear function. Coupling
considered is of partial-state nature. That is, if an oscillator
affects the dynamics of another, the associated coupling term is
not a function of the state vector of the oscillator that is
affecting, but only of some projection of that vector. We make no
symmetry nor balancedness assumption on the coupling graph.

Our finding in the paper is roughly that (almost) synchronization
occurs among the oscillators (at some magnitude independent of the
initial conditions, the coupling, and the frequency of
oscillations $\omega$) if the following conditions hold: (a) the
frequency of oscillations is high, (b) there is at least one
oscillator that directly or indirectly affects all others, and (c)
the initial phases of oscillators, when each is represented by a
point on the unit circle, all lie in an open semicircle. More
formally, what we show is that if the coupling graph is connected
and the initial phases of oscillators lie in an open semicircle,
then the solutions of oscillators can be made converge arbitrarily
close to each other, while initially being arbitrarily far from
one another, by choosing large enough $\omega$. Incidentally, as
sort of a byproduct of our analysis for nonlinear oscillators, we
also generate a similar result for an array of nonlinearly-coupled
harmonic oscillators. We show that harmonic oscillators (almost)
synchronize provided that the coupling graph is connected and the
frequency of oscillations is high. Different from nonlinear
oscillators, the initial phases of harmonic oscillators do not
have any effect on synchronization, at least when $\omega$ is
sufficiently high. Also, again unlike nonlinear oscillators,
synchronized oscillations can take place at any magnitude,
depending on the initial conditions.

In establishing our results we use tools from averaging theory
\cite{arnold88}. Our three-step approach is as follows. We first
apply a time-varying change of coordinates to the array, which
keeps the relative distances between the states of oscillators
intact (Section~\ref{sec:coc}). This change of coordinates renders
the array periodically time-varying. Then we take the time-average
of this new array and show, for the average array, that the
oscillators synchronize if their initial phases lie in an open
semicircle and the coupling graph is connected
(Section~\ref{sec:average}). Finally we work out what that result
means for the original array. Namely, we show that the higher the
frequency of oscillations the more the original array behaves like
its average (Section~\ref{sec:sync}).

\section{Preliminaries}\label{sec:pre}
Let $\Real_{\geq 0}$ denote the set of nonnegative real numbers.
Let $|\!\cdot\!|$ denote Euclidean norm. A function
$\alpha:\Real_{\geq 0}\to \Real_{\geq 0}$ is said to belong to
class-$\K$ $(\alpha\in\K)$ if it is continuous, zero at zero, and
strictly increasing. It is said to belong to class-$\Kinf$ if it
is also unbounded. Given a closed set ${\mathcal
S}\subset\Real^{n}$ and a point $x\in\Real^{n}$, $|x|_{\mathcal
S}$ denotes the (Euclidean) distance from $x$ to ${\mathcal S}$.
Convex hull of ${\mathcal S}$ is denoted by $\co{\mathcal S}$.
Number of elements of a finite set $\I$ is denoted by $\#\I$.

A ({\em directed}) {\em graph} is a pair $(\N,\,\setE)$ where $\N$
is a nonempty finite set (of {\em nodes}) and $\setE$ is a finite
collection of ordered pairs ({\em edges}) $(n_{i},\,n_{j})$ with
$n_{i},\,n_{j}\in\N$. A {\em directed path} from $n_{1}$ to
$n_{l}$ is a sequence of nodes $(n_{1},\,\ldots,\,n_{l})$ such
that $(n_{i},\,n_{i+1})$ is an edge for
$i\in\{1,\,\ldots,\,l-1\}$. A graph is {\em connected} if it has a
node to which there exists a directed path from every other
node.\footnote{This is another way of saying that the graph
contains a directed spanning tree.}

A set of functions $\{\gamma_{ij}:\Real\to\Real\}$, where
$i,\,j=1,\,\ldots,\,m$ with $i\neq j$, describes (is) an {\em
interconnection} if the following hold for all $i,\,j$ and all
$s\in\Real$:
\begin{itemize}
\item[\bf (G1)] $\gamma_{ij}(0)=0$ and $s\gamma_{ij}(s)\geq 0$.
\vspace{-0.05in} \item[\bf (G2)] Either $\gamma_{ij}(s)\equiv 0$
or there exists $\alpha\in\K$ such that $|\gamma_{ij}(s)|\geq
\alpha(|s|)$.
\end{itemize}
To mean $\gamma_{ij}(s)\equiv 0$ we write $\gamma_{ij}=0$.
Otherwise we write $\gamma_{ij}\neq 0$. The graph of
interconnection $\{\gamma_{ij}\}$ is pair $(\N,\,\setE)$, where
$\N=\{n_{1},\,\ldots,\,n_{m}\}$ and $\setE$ is such that
$(n_{i},\,n_{j})\in\setE$ iff $\gamma_{ij}\neq 0$. An
interconnection is said to be {\em connected} when its graph is
connected. To give an example, consider some set of functions
$\G:=\{\gamma_{ij}: i,\,j=1,\,\ldots,\,4\}$.  Let
$\gamma_{13},\,\gamma_{23},\,\gamma_{24},\,\gamma_{32}$ be as in
Fig.~1 and the remaining functions be zero. Note that each
$\gamma_{ij}$ satisfy conditions (G1) and (G2). Therefore set $\G$
describes an interconnection. To determine whether $\G$ is
connected or not we examine its graph, see Fig.~2. Since there
exists a path to node $n_{4}$ from every other node, we deduce
that the graph (hence interconnection $\G$) is connected.

\begin{figure}[h]
\begin{center}
\includegraphics[scale=0.6]{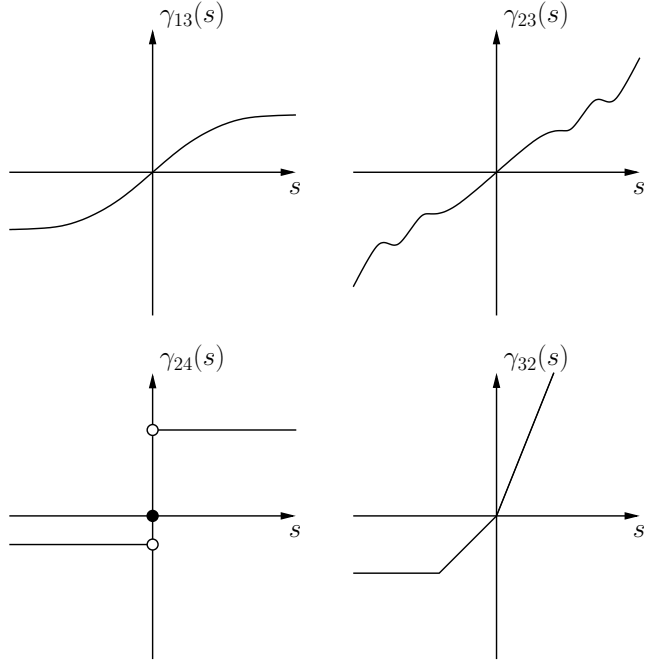}
\caption{Some examples of coupling functions.}
\end{center}
\end{figure}\label{fig:gammaij}

\begin{figure}[h]
\begin{center}
\includegraphics[scale=0.6]{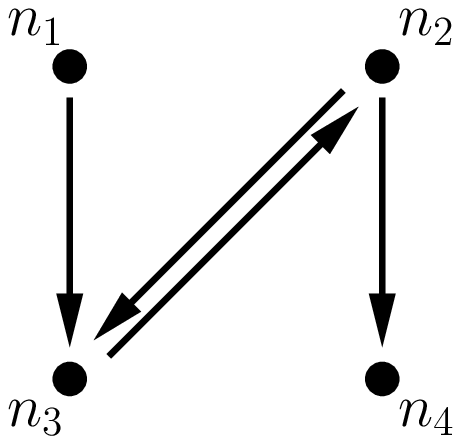}
\caption{Graph of interconnection $\G$.}
\end{center}
\end{figure}

Consider two half lines on $\Real^{2}$ (originating from the
origin) such that the (smaller) angle between them is strictly
less than $\pi$. Then their convex hull $\C$ is called a {\em
cone}, see Fig.~3. {\em Angle} of cone $\C$, denoted by
$\angle\C$, is the angle between the two half lines. Note that
$\angle\C\in[0,\,\pi)$ and a half line is a cone with zero angle.
Given $r>0$, a set $\W_{r}\subset\Real^{2}$ is called an {\em
$r$-wedge} if it can be written as
$\W_{r}=\co(\C\cap\{x\in\Real^{2}:r_{1}\leq |x|\leq r_{2}\})$ for
some cone $\C$ and $0<r_{1}\leq r\leq r_{2}$. See Fig.~4.

\begin{figure}[h]
\begin{center}
\includegraphics[scale=0.6]{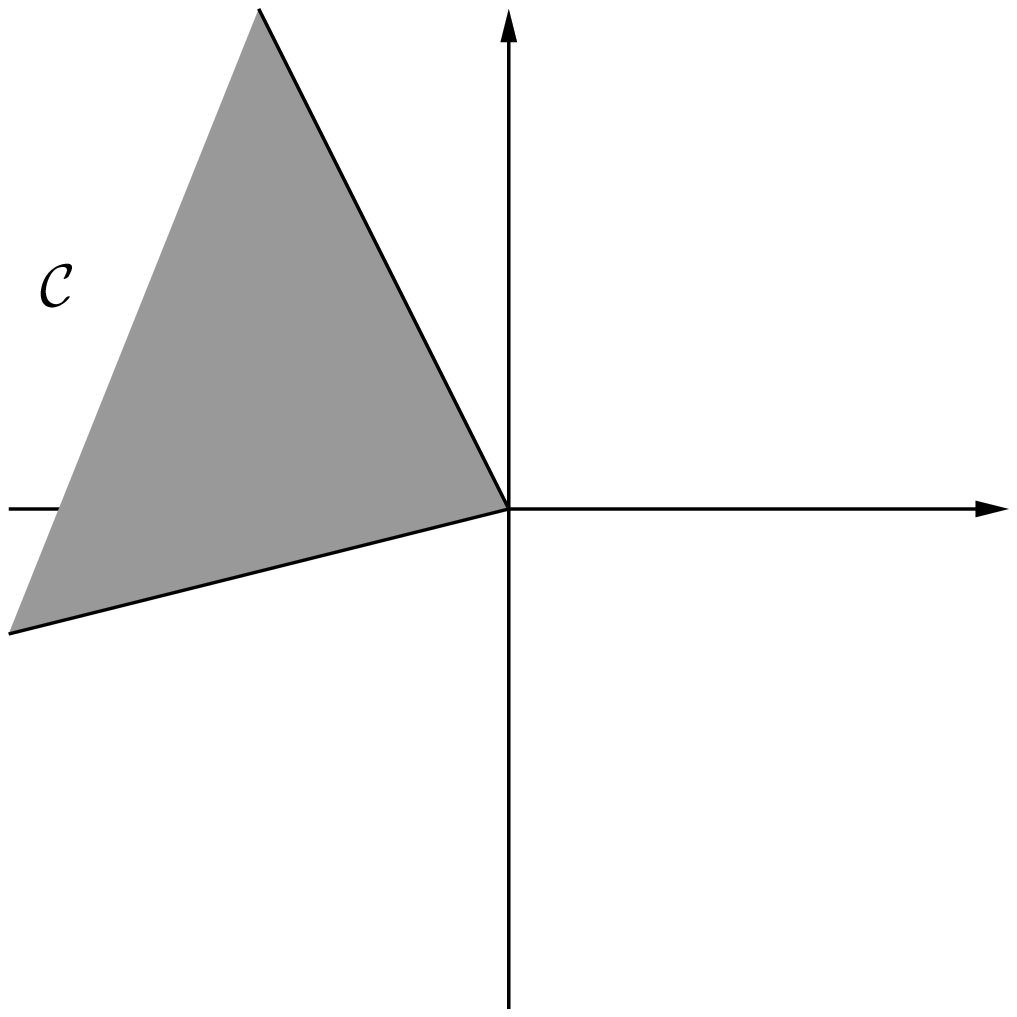}
\caption{A cone $\C$.}
\end{center}
\end{figure}\label{fig:cone}

\begin{figure}[h]
\begin{center}
\includegraphics[scale=0.6]{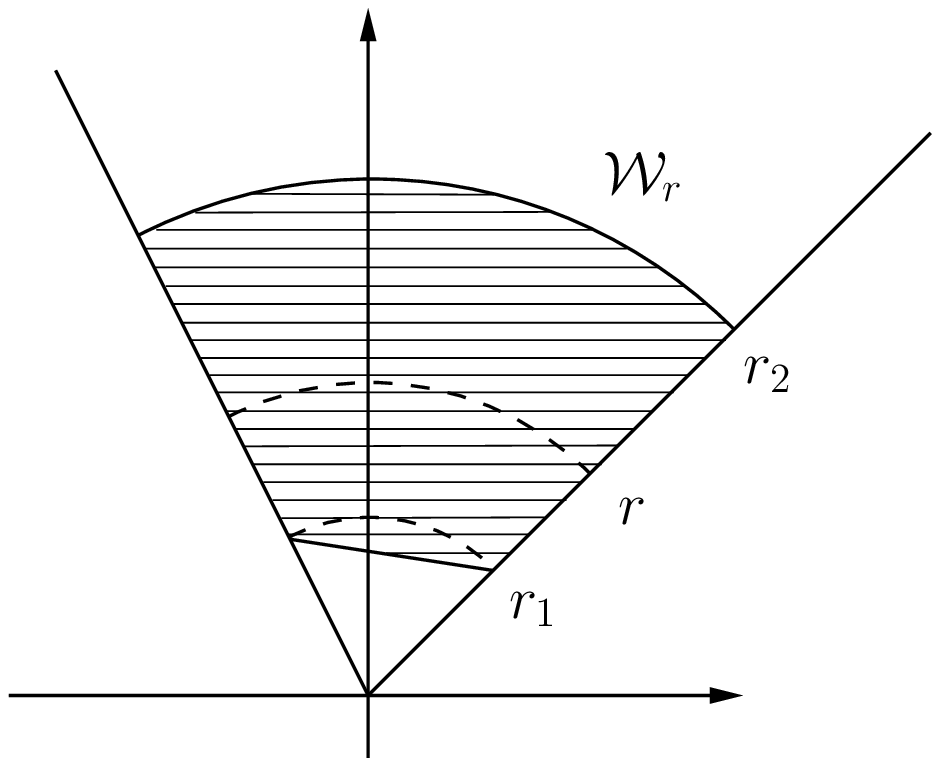}
\caption{An $r$-wedge $\W_{r}$.}
\end{center}
\end{figure}\label{fig:wedger}

\section{Problem statement}\label{sec:ps}
A general model for a planar oscillator is given by {\em Lienard's
equation}
\begin{eqnarray}\label{eqn:lienard}
{\ddot q}+f(q){\dot q}+g(q)=0\,.
\end{eqnarray}
Sufficient conditions have been established on functions $f$ and
$g$ in order for system~\eqref{eqn:lienard} to admit a unique,
stable limit cycle encircling the origin of the phase plane; see,
for instance, \cite[Thm.~1]{elrady04}. In this paper we adopt
those general conditions on function $f$, which are stated later
in the section. Regarding function $g$, we assume linearity.
Namely, we study the case $g(q)=\omega^{2}q$, where $\omega$ is
some constant. Although this assumption is restrictive compared to
what is assumed in \cite[Thm.~1]{elrady04}, it nevertheless still
allows us cover some physically important cases. Most famous
example that falls into the class of systems being studied here
would be van der Pol oscillator \cite{khalil96}. In this paper we
search for sufficient conditions that yield synchronization of a
number of coupled Lienard oscillators. Below we give the precise
description of the problem.

Consider the following array of coupled Lienard oscillators
\begin{subeqnarray}\label{eqn:lienosc}
\dot{q}_{i}&=&\omega p_{i}\\
\dot{p}_{i}&=&-\omega q_{i}-f(q_{i})p_{i}+\sum_{j\neq
i}\gamma_{ij}(p_{j}-p_{i})\,, \qquad i=1,\,\ldots,\,m
\end{subeqnarray}
where $\omega>0$ is the {\em frequency of oscillations} and
$\{\gamma_{ij}\}$ is a connected interconnection. Let
$\xi_{i}\in\Real^{2}$ denote the {\em state} of $i$th oscillator,
i.e., $\xi_{i}=[q_{i}\ p_{i}]^{T}$. Sometimes we choose to handle
this array~\eqref{eqn:lienosc} of $m$ planar oscillators as a
single system in $\Real^{2m}$. If we let
$\xi:=[\xi_{1}^{T}\,\ldots\,\xi_{m}^{T}]^{T}$ and
\begin{eqnarray*}
\ell(\xi,\,\omega):= \left[\begin{array}{c} \left[\begin{array}{l}
\omega p_{1}\\
-\omega q_{1}-f(q_{1})p_{1}+\sum\gamma_{1j}(p_{j}-p_{1})
\end{array}\right]\\
\vdots\\
\left[\begin{array}{l}
\omega p_{m}\\
-\omega q_{m}-f(q_{m})p_{m}+\sum\gamma_{mj}(p_{j}-p_{m})
\end{array}\right]
\end{array}\right]
\end{eqnarray*}
then array~\eqref{eqn:lienosc} defines the below system
\begin{eqnarray}\label{eqn:ell}
\dot\xi=\ell(\xi,\,\omega)\,.
\end{eqnarray}
We assume throughout the paper that $\gamma_{ij}$ and
$f:\Real\to\Real$ are locally Lipschitz. Letting
$F(s):=\int_{0}^{s}f(\sigma)d\sigma$, we further assume the
following.
\begin{itemize}
\item[\bf (L1)] $f$ is an even function. \item[\bf (L2)]
$F(s_{0})=0$ for some $s_{0}>0$; $F$ is negative on $(0,\,s_{0})$;
$F$ is positive, nondecreasing, and unbounded on
$(s_{0},\,\infty)$.
\end{itemize}
In this paper the question we ask ourselves is the following.
\begin{center}
{\em What is the effect of $\omega$ on the synchronization
behavior of array ~\eqref{eqn:lienosc}?}
\end{center}
Our approach to the problem is as follows. We first apply a
norm-preserving, time-varying change of coordinates to
array~\eqref{eqn:lienosc} and obtain a new array whose righthand
side is periodic in time. Since this coordinate change does not
alter the relative distances between the states of oscillators, it
is safe to look at the new array to understand the synchronization
behavior of the original one. Hence we focus on the new array.
Although the righthand side of the new array does not look any
pleasanter than the original one, it nevertheless has an {\em
average} since it is periodic in time. Once this average is
computed we realize that it is very simple to understand from
synchronization point of view. From then on we start going
backwards. Using averaging theory, we establish what our findings
on the average array imply, first for the time-varying array and
eventually for the original array.

\section{Change of coordinates}\label{sec:coc}

We define $S(\omega)\in\Real^{2\times 2}$ and
$H,\,V\in\Real^{1\times 2}$ as
\begin{eqnarray*}
S(\omega):=
\left[\!\!
\begin{array}{rr}
0&\omega\\-\omega&0
\end{array}\!\!
\right]\,,\quad H:=[1\ \ 0]\,,\quad V:=[0\ \ 1] \,.
\end{eqnarray*}
Then we rewrite \eqref{eqn:lienosc} as
\begin{eqnarray}\label{eqn:lienosc2}
\dot{\xi}_{i}=S(\omega)\xi_{i}-f(H\xi_{i})V^{T}V\xi_{i}+V^{T}\sum_{j\neq
i}\gamma_{ij}(V(\xi_{j}-\xi_{i}))\,.
\end{eqnarray}
Let $x_{i}(t):=e^{-S(\omega)t}\xi_{i}(t)$ and $x:=[x_{1}^{T}\,
\ldots\,x_{m}^{T}]^{T}$. We can by \eqref{eqn:lienosc2} write
\begin{eqnarray}\label{eqn:lienoscc}
\dot{x}_{i}
&=&-f(He^{S(\omega)t}x_{i})e^{S(\omega)^{T}t}V^{T}Ve^{S(\omega)t}x_{i}\nonumber\\
&&\qquad +\sum_{j\neq i}e^{S(\omega)^{T}t}V^{T}\gamma_{ij}
(Ve^{S(\omega)t}(x_{j}-x_{i}))\nonumber\\
&=&-f([\cos\omega t\ \sin\omega t]x_{i})\left[\!\!\begin{array}{r}
-\sin\omega t\\ \cos\omega t
\end{array}\!\!\right]
[-\sin\omega t\ \cos\omega t]x_{i}\nonumber\\
&&\qquad+\sum_{j\neq i}\left[\!\!\begin{array}{r} -\sin\omega t\\
\cos\omega t
\end{array}\!\!\right]\gamma_{ij}
([-\sin\omega t\ \cos\omega t](x_{j}-x_{i}))\\
&=:&\ellc_{i}(x,\,\omega t)\,.\nonumber
\end{eqnarray}
Let
\begin{eqnarray*}
\ellc(x,\,\omega t):=\left[\begin{array}{c} \ellc_{1}(x,\,\omega
t)\\
\vdots\\
\ellc_{m}(x,\,\omega t)
\end{array}
\right]
\end{eqnarray*}
Then system in $\Real^{2m}$ corresponding to
array~\eqref{eqn:lienoscc} reads
\begin{eqnarray}\label{eqn:ellc}
\dot x=\ellc(x,\,\omega t)\,.
\end{eqnarray}

\begin{remark}\label{rem:arkabak}
Since the change of coordinates is realized via rotation matrix
$e^{-S(\omega)t}$, the relative distances are preserved, that is
$|x_{i}(t)-x_{j}(t)|=|\xi_{i}(t)-\xi_{j}(t)|$ for all $t$ and all
$i,\,j$. This means from synchronization point of view that the
behavior of array~\eqref{eqn:lienosc} will be inherited by
array~\eqref{eqn:lienoscc}.
\end{remark}
Exact analysis of \eqref{eqn:lienoscc} seems far from yielding.
Therefore we attempt to understand this system via its
approximation.

\section{Average array}\label{sec:average}

Observe that the righthand side of \eqref{eqn:lienoscc} is
periodic in time. Time-average functions $\bar
f:\Real^{2}\to\Real^{2}$ and
$\bar\gamma_{ij}:\Real^{2}\to\Real^{2}$ are given by
\begin{eqnarray*}
\bar f(v):=\frac{1}{2\pi}\int_{0}^{2\pi}
 f([\cos\varphi\ \sin\varphi]v)\left[\!\!
\begin{array}{r}
-\sin\varphi\\
\cos\varphi
\end{array}\!\!\right]
[-\sin\varphi\ \cos\varphi]v d\varphi
\end{eqnarray*}
and
\begin{eqnarray}\label{eqn:gammaijbar}
\bar{\gamma}_{ij}(v):=\frac{1}{2\pi}\int_{0}^{2\pi}
\left[\!\!\begin{array}{r} -\sin\varphi \\ \cos\varphi
\end{array}\!\!\right]\gamma_{ij}
([-\sin\varphi\ \cos\varphi]v)d\varphi\,.
\end{eqnarray}
Then the {\em average} array dynamics read
\begin{eqnarray}\label{eqn:lienosccbar}
{\dot\eta}_{i}=-\bar f(\eta_{i})+\sum_{j\neq i}
\bar\gamma_{ij}(\eta_{j}-\eta_{i})\,.
\end{eqnarray}
Let $\eta:=[\eta_{1}^{T}\,\ldots\,\eta_{m}^{T}]^{T}$ and
\begin{eqnarray*}
\ellcbar(\eta):=\left[\begin{array}{c} -\bar f(\eta_{1})+\sum
\bar\gamma_{1j}(\eta_{j}-\eta_{1})\\
\vdots\\
-\bar f(\eta_{m})+\sum \bar\gamma_{mj}(\eta_{j}-\eta_{m})
\end{array}
\right]
\end{eqnarray*}
Then system in $\Real^{2m}$ corresponding to
array~\eqref{eqn:lienosccbar} reads
\begin{eqnarray}\label{eqn:ellcbar}
\dot \eta=\ellcbar(\eta)\,.
\end{eqnarray}

\begin{remark}
Note that instead of array~\eqref{eqn:lienosc2} if we start with
the below array
\begin{eqnarray*}
\dot{\xi}_{i}=S(\omega)\xi_{i}-f(H\xi_{i})V^{T}V\xi_{i}+C^{T}\sum_{j\neq
i}\gamma_{ij}(C(\xi_{j}-\xi_{i}))\,,
\end{eqnarray*}
where $C\in\Real^{1\times 2}$ satisfies $CC^{T}=1$, we still reach
the same average array~\eqref{eqn:lienosccbar}. Therefore for the
analysis to follow the coupling need not be through {\em
velocities} as is the case in \eqref{eqn:lienosc}. Any projection
of the state $C\xi_{i}$ is as good as any other for coupling. For
instance, the results in this paper hold true for the below array
\begin{eqnarray*}
\dot{q}_{i}&=&\omega p_{i}+\sum_{j\neq
i}\gamma_{ij}(q_{j}-q_{i})\\
\dot{p}_{i}&=&-\omega q_{i}-f(q_{i})p_{i}\,.
\end{eqnarray*}
\end{remark}

Theory of perturbations \cite[Ch.~4~\S~17]{arnold88} tells us
that, starting from close initial conditions, the solution of a
system with a periodic righthand side and the solution of the
time-average approximate system stay close for a long time
provided that the period is small enough. Therefore
\eqref{eqn:lienosccbar} should tell us a great deal about the
behavior of \eqref{eqn:lienoscc} when $\omega\gg 1$. Understanding
\eqref{eqn:lienosccbar} requires understanding average functions
$\bar f$ and $\bar\gamma_{ij}$. Let us begin with the former.

\begin{lemma}\label{lem:keyf}
We have $\displaystyle \bar f(v)=\kappa(|v|)\frac{v}{|v|}$ where
$\kappa:\Real_{\geq 0}\to\Real$ is
\begin{eqnarray}\label{eqn:kappa}
\kappa(s):=\frac{2}{\pi}\int_{0}^{s}
f(\sigma)\sqrt{1-\frac{\sigma^{2}}{s^{2}}}d\sigma\,.
\end{eqnarray}
\end{lemma}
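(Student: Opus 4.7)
The plan is to exploit the rotational symmetry built into the definition of $\bar f$. Since the integrand depends on $v$ only through inner products with unit-circle vectors and multiplication by a unit-circle vector, rotating $v$ in the plane should rotate $\bar f(v)$ by the same angle. This suggests writing $v = s(\cos\theta,\sin\theta)^T$ with $s=|v|$ and substituting $\psi = \varphi - \theta$ in the defining integral; by $2\pi$-periodicity of the integrand, the limits remain $[0,2\pi]$.

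Standard trigonometric identities then give $[\cos\varphi\ \sin\varphi]v = s\cos\psi$, $[-\sin\varphi\ \cos\varphi]v = -s\sin\psi$, and $(-\sin\varphi,\cos\varphi)^T = R(\theta)(-\sin\psi,\cos\psi)^T$, where $R(\theta)$ denotes planar rotation by $\theta$. Pulling $R(\theta)$ out in front, $\bar f(v)$ becomes $R(\theta)$ applied to a vector whose two scalar entries are $\psi$-integrals depending only on $s$. The entry coming from the second coordinate is the integral of $f(s\cos\psi)\sin\psi\cos\psi$ over a full period, which vanishes by odd-function symmetry (since $\cos\psi$ is even and $\sin\psi\cos\psi$ is odd in $\psi$). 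The entry from the first coordinate survives, and $R(\theta)(1,0)^T = (\cos\theta,\sin\theta)^T = v/|v|$ delivers the radial direction claimed in the lemma.

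It remains to identify the surviving scalar with $\kappa(s)$. Using that $\sin^2\psi$ has period $\pi$ and that, by the evenness of $f$ assumed in (L1), $f(s\cos(\pi-\psi))=f(-s\cos\psi)=f(s\cos\psi)$, the integral $\int_0^{2\pi} f(s\cos\psi)\sin^2\psi\,d\psi$ reduces to $4\int_0^{\pi/2} f(s\cos\psi)\sin^2\psi\,d\psi$. The change of variables $\sigma = s\cos\psi$, so that $d\sigma = -s\sin\psi\,d\psi$ and $\sin^2\psi = 1 - \sigma^2/s^2$, converts the latter immediately into $(4/s)\int_0^{s} f(\sigma)\sqrt{1-\sigma^2/s^2}\,d\sigma$; combined with the overall prefactor $s/(2\pi)$, this yields exactly \eqref{eqn:kappa}. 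The argument is a direct computation; the only thing requiring genuine care is tracking the minus signs in $[-\sin\varphi\ \cos\varphi]v = -s\sin\psi$ and in the outer vector, so that their product produces the correct sign of $\kappa$.
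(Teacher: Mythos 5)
Your argument is correct and follows essentially the same route as the paper's proof: shift the angular variable by the phase $\theta$ of $v$ (the paper writes out the rotation $R(\theta)$ componentwise via trigonometric identities rather than factoring it out), kill the tangential component by odd symmetry, reduce the remaining integral to $[0,\pi/2]$ using the evenness of $f$ from (L1), and recover $\kappa(s)$ by the substitution $\sigma=s\cos\psi$ (the paper equivalently uses $\sigma=r\sin\varphi$ after swapping cosine for sine). The constants and signs in your computation check out, so no changes are needed.
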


\begin{proof}
Given $v\in\Real^{2}$, let $r=|v|$ and $\theta\in[0,\,2\pi)$ be
such that $r[\cos\theta\ \sin\theta]^{T}=v$. Then, by using
standard trigonometric identities,
\begin{eqnarray*}
\bar f(v) &=&\frac{1}{2\pi}\int_{0}^{2\pi}
 f(r(\cos\varphi\cos\theta+\sin\varphi\sin\theta))\\
&& \qquad \times r(-\sin\varphi\cos\theta+\cos\varphi\sin\theta)
\left[\!\!
\begin{array}{r}
-\sin\varphi\\
\cos\varphi
\end{array}\!\!\right] d\varphi\\
&=&-\frac{r}{2\pi}\int_{0}^{2\pi} f(r(\cos(\varphi-\theta))
\sin(\varphi-\theta) \left[\!\!
\begin{array}{r}
-\sin\varphi\\
\cos\varphi
\end{array}\!\!\right] d\varphi\\
&=&-\frac{r}{2\pi}\int_{0}^{2\pi} f(r\cos\varphi) \sin\varphi
\left[\!\!
\begin{array}{r}
-\sin(\varphi+\theta)\\
\cos(\varphi+\theta)
\end{array}\!\!\right] d\varphi\\
&=&-\frac{r}{2\pi}\int_{0}^{2\pi} f(r\cos\varphi) \sin\varphi
\left[\!\!
\begin{array}{r}
-\sin\varphi\cos\theta-\cos\varphi\sin\theta\\
\cos\varphi\cos\theta-\sin\varphi\sin\theta
\end{array}\!\!\right] d\varphi\\
&=&\left(\frac{r}{2\pi}\int_{0}^{2\pi} f(r\cos\varphi)
\sin^{2}\varphi d\varphi\right) \left[\!\!
\begin{array}{r}
\cos\theta\\
\sin\theta
\end{array}\!\!\right]\\
&&\qquad+\left(\frac{r}{2\pi}\int_{0}^{2\pi} f(r\cos\varphi)
\sin\varphi\cos\varphi d\varphi\right) \left[\!\!
\begin{array}{r}
\sin\theta\\
-\cos\theta
\end{array}\!\!\right]
\end{eqnarray*}
where the second term is zero since $f(r\cos\varphi) \sin2\varphi$
is an odd function and we can write
\begin{eqnarray*}
\int_{0}^{2\pi} f(r\cos\varphi) \sin\varphi\cos\varphi d\varphi
&=&\frac{1}{2}\int_{-\pi}^{\pi} f(r\cos\varphi) \sin2\varphi
d\varphi\,.
\end{eqnarray*}
Therefore
\begin{eqnarray}\label{eqn:fbar}
\bar f(v) &=&\left(\frac{r}{2\pi}\int_{0}^{2\pi} f(r\cos\varphi)
\sin^{2}\varphi d\varphi\right) \left[\!\!
\begin{array}{r}
\cos\theta\\
\sin\theta
\end{array}\!\!\right]\nonumber\\
&=& \left(\frac{1}{2\pi}\int_{-\pi}^{\pi} f(r\cos\varphi)
r\sin^{2}\varphi d\varphi\right)\frac{v}{|v|}\,.
\end{eqnarray}
Since $f(r\cos\varphi) r\sin^{2}\varphi$ and $f(r\sin\varphi)
r\cos^{2}\varphi$ are even functions we can write
\begin{eqnarray*}
\frac{1}{2\pi}\int_{-\pi}^{\pi} f(r\cos\varphi) r\sin^{2}\varphi
d\varphi &=&\frac{1}{\pi}\int_{0}^{\pi} f(r\cos\varphi)
r\sin^{2}\varphi d\varphi\\
&=&\frac{1}{\pi}\int_{-\pi/2}^{\pi/2} f(r\sin\varphi)
r\cos^{2}\varphi d\varphi\\
&=&\frac{2}{\pi}\int_{0}^{\pi/2} f(r\sin\varphi) r\cos^{2}\varphi
d\varphi\,.
\end{eqnarray*}
Then by change of variables $\sigma:=r\sin\varphi$ we obtain
\begin{eqnarray}\label{eqn:fbar2}
\frac{1}{2\pi}\int_{-\pi}^{\pi} f(r\cos\varphi) r\sin^{2}\varphi\,
d\varphi = \frac{2}{\pi}\int_{0}^{r} f(\sigma)
\sqrt{1-\frac{\sigma^{2}}{r^{2}}} d\sigma\,.
\end{eqnarray}
Combining \eqref{eqn:fbar} and \eqref{eqn:fbar2} gives the result.
\end{proof}

\begin{claim}\label{clm:kappa}
Defined in \eqref{eqn:kappa}, function $\kappa$ satisfies {\em
(L2)}.
\end{claim}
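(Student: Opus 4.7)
The plan is to rewrite $\kappa$ in a form in which $F$ (whose sign and monotonicity are pinned down by (L2)) replaces $f$, and then to verify the components of (L2) for $\kappa$ in a convenient order. Integration by parts in \eqref{eqn:kappa} with $u=\sqrt{1-\sigma^{2}/s^{2}}$ and $dv=f(\sigma)\,d\sigma$ annihilates both boundary terms (using $F(0)=0$ and $\sqrt{1-\sigma^{2}/s^{2}}$ vanishing at $\sigma=s$) and yields
\[
\kappa(s)\;=\;\frac{2}{\pi s}\int_{0}^{s}\frac{\sigma F(\sigma)}{\sqrt{s^{2}-\sigma^{2}}}\,d\sigma.
\]
From this, $\kappa(s)<0$ for $s\in(0,s_{0}]$ is immediate since $F<0$ on $(0,s_{0})$. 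Unboundedness follows by splitting the integral at $s_{0}$ and at $s/2$: for $s\ge 2s_{0}$ the piece on $[s/2,s]$ is bounded below by $F(s/2)\int_{s/2}^{s}\sigma/\sqrt{s^{2}-\sigma^{2}}\,d\sigma=F(s/2)\,s\sqrt{3}/2$ (using $F$ nondecreasing on $(s_{0},\infty)$), while the piece on $[0,s_{0}]$ is $O(1)$ in absolute value; since $F(s/2)\to\infty$, we get $\kappa(s)\to\infty$.

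The main obstacle is monotonicity. I would differentiate \eqref{eqn:kappa} under the integral to get
\[
\kappa'(s)\;=\;\frac{2}{\pi s^{2}}\int_{0}^{s}\frac{\sigma^{2}f(\sigma)}{\sqrt{s^{2}-\sigma^{2}}}\,d\sigma,
\]
and, for $s>s_{0}$, split at $s_{0}$. On $[s_{0},s]$, nondecreasingness of $F$ on $(s_{0},\infty)$ gives $f\ge 0$, so that part of the integrand is nonnegative. On $[0,s_{0}]$, where $f$ has mixed sign, a second integration by parts (with $v=F(\sigma)$, and again boundary-free since $F(0)=F(s_{0})=0$ and $s>s_{0}$ keeps $\sigma^{2}/\sqrt{s^{2}-\sigma^{2}}$ finite at $\sigma=s_{0}$) converts the offending piece into
\[
-\int_{0}^{s_{0}}F(\sigma)\,\frac{\sigma(2s^{2}-\sigma^{2})}{(s^{2}-\sigma^{2})^{3/2}}\,d\sigma,
\]
which is strictly positive because $F(\sigma)<0$ on $(0,s_{0})$ and the remaining factor is positive. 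Hence $\kappa'(s)>0$ for all $s>s_{0}$, and the mean value theorem makes $\kappa$ strictly increasing on $[s_{0},\infty)$.

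The conclusion is then routine. Continuity of $\kappa$, strict monotonicity on $[s_{0},\infty)$, the strict negativity $\kappa(s_{0})<0$, and $\kappa(s)\to\infty$ together produce a unique $\tilde s_{0}>s_{0}$ with $\kappa(\tilde s_{0})=0$. Combining negativity on $(0,s_{0}]$ with strict monotonicity on $[s_{0},\tilde s_{0})$ yields $\kappa<0$ on $(0,\tilde s_{0})$; monotonicity past $\tilde s_{0}$ together with $\kappa(\tilde s_{0})=0$ yields $\kappa>0$ and nondecreasing on $(\tilde s_{0},\infty)$; and unboundedness is already in hand. Thus $\kappa$ satisfies (L2), with $\tilde s_{0}$ playing the role of $s_{0}$.
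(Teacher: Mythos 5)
Your argument is correct, and its key step goes by a genuinely different route than the paper's. Both proofs share the same opening move, namely integration by parts to trade $f$ for $F$, whose sign structure is what (L2) pins down: your identity $\kappa(s)=\frac{2}{\pi s}\int_0^s \sigma F(\sigma)(s^2-\sigma^2)^{-1/2}\,d\sigma$ is exactly the mechanism behind the paper's inequality \eqref{eqn:un}, and it gives negativity on $(0,s_0]$ at once. Where you diverge is in monotonicity and growth: the paper never differentiates $\kappa$; it fixes $s>t>s_0$ and compares the two integrals directly, integrating the difference kernel $\Delta=\delta_s-\delta_t$ by parts and invoking the kernel properties (D1)--(D3), and it gets unboundedness from (D3) together with $F(s)\to\infty$. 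You instead compute $\kappa'(s)=\frac{2}{\pi s^2}\int_0^s \sigma^2 f(\sigma)(s^2-\sigma^2)^{-1/2}\,d\sigma$, discard the part over $[s_0,s]$ using $f=F'\ge 0$ there, and neutralize the mixed-sign part over $[0,s_0]$ with a second integration by parts (boundary terms vanish since $F(0)=F(s_0)=0$), obtaining $\kappa'>0$ on $(s_0,\infty)$; your unboundedness comes from an explicit tail estimate at $s/2$. The derivative formula is correct and, conveniently, involves only $f$ and not $f'$, so local Lipschitz continuity of $f$ suffices; your approach buys an explicit quantitative handle on $\kappa'$, while the paper's two-point comparison avoids any differentiation issues.

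Two small points to tighten. First, differentiating \eqref{eqn:kappa} under the integral sign is not covered by the textbook Leibniz rule, because the $s$-derivative of the integrand blows up (integrably) at the moving endpoint $\sigma=s$; the clean repair is to rescale your own $F$-form, $\kappa(s)=\frac{2}{\pi}\int_0^1 u\,F(su)(1-u^2)^{-1/2}\,du$ via $\sigma=su$, and differentiate that (dominated convergence applies since $F$ is $C^1$ and $u^2(1-u^2)^{-1/2}$ is integrable on $[0,1]$), which returns exactly your formula for $\kappa'$. Second, in the unboundedness argument you split at $s_0$ and at $s/2$ but estimate only two of the three pieces; add the one-line remark that the middle piece over $[s_0,s/2]$ is nonnegative because $F\ge 0$ there. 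With these remarks the proof is complete and delivers, as you state, a unique zero $\tilde s_0>s_0$ with $\kappa<0$ on $(0,\tilde s_0)$ and $\kappa$ positive, nondecreasing, and unbounded on $(\tilde s_0,\infty)$, i.e., (L2).
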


\begin{proof}
Recall that $F(s)=\int_{0}^{s}f$ satisfies (L2). Therefore there
exists $s_{0}>0$ such that $F(s_{0})=0$, $F(s)<0$ for
$s\in(0,\,s_{0})$, $F$ is positive, nondecreasing on
$(s_{0},\,\infty)$, and $F(s)\to\infty$ as $s\to\infty$. Given
$s>0$, let $\delta_{s}:[0,\,s]\to[0,\,1]$ be
$\delta_{s}(\sigma):=\sqrt{1-(\sigma/s)^2}$. Then we can write
$\kappa(s)=\frac{2}{\pi}\int_{0}^{s}f(\sigma)\delta_{s}(\sigma)d\sigma$.
We observe the following properties.
\begin{itemize}
\item[\bf (D1)] Given $s>0$, function $\delta_{s}(\cdot)$ is
strictly decreasing. \item[\bf (D2)] Given $s>t>0$, map
$\sigma\mapsto\delta_{s}(\sigma)-\delta_{t}(\sigma)$ is strictly
increasing on $[0,\,t]$. \item[\bf (D3)] For each $t>0$ there
exists $s>t$ such that $\delta_{s}(\sigma)>\frac{1}{2}$ for
$\sigma\in(0,\,t)$.
\end{itemize}
It follows from integration by parts that
\begin{eqnarray}\label{eqn:un}
\int_{0}^{s}f(\sigma)\delta_{s}(\sigma)d\sigma<0\qquad\forall
s\in(0,\,s_{0})\,.
\end{eqnarray}
Let us convince ourselves that \eqref{eqn:un} is true. Let
$\delta_{s}^{\prime}(\sigma):=d\delta_{s}(\sigma)/d\sigma$. Given
$s\in(0,\,s_{0})$ we can write
\begin{eqnarray*}
\int_{0}^{s}f(\sigma)\delta_{s}(\sigma)d\sigma &=&
F(\sigma)\delta_{s}(\sigma)\Big|_{0}^{s}
-\int_{0}^{s}F(\sigma)\delta_{s}^{\prime}(\sigma)d\sigma\\
&=& -\int_{0}^{s}F(\sigma)\delta_{s}^{\prime}(\sigma)d\sigma\\
&<&0
\end{eqnarray*}
since for $\sigma\in(0,\,s)$ we have
$\delta_{s}^{\prime}(\sigma)<0$ by (D1) and $F(\sigma)<0$ by (L2).

Next we show
\begin{eqnarray}\label{eqn:piece1}
\int_{0}^{s_{0}}f(\sigma)\delta_{s}(\sigma)d\sigma
>\int_{0}^{s_{0}}f(\sigma)\delta_{t}(\sigma)d\sigma\quad\mbox{for}\quad
s>t>s_{0}\,.
\end{eqnarray}
Given $s>t>s_{0}$, let
$\Delta(\sigma):=\delta_{s}(\sigma)-\delta_{t}(\sigma)$ and
$\Delta^{\prime}(\sigma):=d\Delta(\sigma)/d\sigma$. We can write
\begin{eqnarray*}
\int_{0}^{s_{0}}f(\sigma)\delta_{s}(\sigma)d\sigma &=&
\int_{0}^{s_{0}}f(\sigma)\Delta(\sigma)d\sigma+\int_{0}^{s_{0}}f(\sigma)\delta_{t}(\sigma)d\sigma\\
&=& F(\sigma)\Delta(\sigma)\Big|_{0}^{s_{0}}
-\int_{0}^{s_{0}}F(\sigma)\Delta^{\prime}(\sigma)d\sigma+\int_{0}^{s_{0}}f(\sigma)\delta_{t}(\sigma)d\sigma\\
&=& -\int_{0}^{s_{0}}F(\sigma)\Delta^{\prime}(\sigma)d\sigma+\int_{0}^{s_{0}}f(\sigma)\delta_{t}(\sigma)d\sigma\\
&>& \int_{0}^{s_{0}}f(\sigma)\delta_{t}(\sigma)d\sigma
\end{eqnarray*}
since for $\sigma\in(0,\,s_{0})$ we have
$\Delta^{\prime}(\sigma)>0$ by (D2) and $F(\sigma)<0$ by (L2).

Since $F$ is positive, nondecreasing on $(s_{0},\,\infty)$, we
have $f(z)>0$ for $z>s_{0}$. Therefore (D2) implies
\begin{eqnarray}\label{eqn:piece2}
\int_{s_0}^{s}f(\sigma)\delta_{s}(\sigma)d\sigma
>\int_{s_0}^{t}f(\sigma)\delta_{t}(\sigma)d\sigma\quad\mbox{for}\quad
s>t>s_{0}\,.
\end{eqnarray}
Combining \eqref{eqn:piece1} and \eqref{eqn:piece2} we obtain
\begin{eqnarray}\label{eqn:deux}
\int_{0}^{s}f(\sigma)\delta_{s}(\sigma)d\sigma
>\int_{0}^{t}f(\sigma)\delta_{t}(\sigma)d\sigma
\quad\mbox{for}\quad s>t>s_0\,.
\end{eqnarray}
 Moreover, since $F(s)\to\infty$ as
$s\to\infty$, (D3) readily yields
\begin{eqnarray}\label{eqn:trois}
\lim_{s\to\infty}\int_{0}^{s}f(\sigma)\delta_{s}(\sigma)d\sigma
=\infty\,.
\end{eqnarray}
Combining \eqref{eqn:un}, \eqref{eqn:deux}, and \eqref{eqn:trois}
gives the result.
\end{proof}

\begin{lemma}\label{lem:keyg}
We have $\displaystyle
\bar\gamma_{ij}(v)=\rho_{ij}(|v|)\frac{v}{|v|}$ where
$\rho_{ij}:\Real_{\geq 0}\to\Real_{\geq 0}$ is
\begin{eqnarray}\label{eqn:keyg}
\rho_{ij}(s):=\frac{1}{2\pi}\int_{0}^{2\pi}
\gamma_{ij}(s\sin\varphi)\sin\varphi d\varphi\,.
\end{eqnarray}
\end{lemma}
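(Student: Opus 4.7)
The plan is to mirror the polar-coordinates argument that worked for $\bar f$ in Lemma~\ref{lem:keyf}. I would start by parameterizing $v\in\Real^{2}$ as $v=r[\cos\theta\ \sin\theta]^{T}$ with $r=|v|$, so that the argument of $\gamma_{ij}$ inside \eqref{eqn:gammaijbar} simplifies to $[-\sin\varphi\ \cos\varphi]v=-r\sin(\varphi-\theta)$. A shift of integration variable $\psi:=\varphi-\theta$, which is harmless because everything in sight is $2\pi$-periodic, then puts the integrand into a form in which the only $\theta$-dependence sits in the outer vector $[-\sin(\psi+\theta),\ \cos(\psi+\theta)]^{T}$, while the scalar factor $\gamma_{ij}(-r\sin\psi)$ is a pure function of $\psi$.

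Next I would expand that outer vector with the angle-addition formulas, producing four scalar integrals over $\psi\in[0,2\pi]$. Two of them have integrand of the form $\cos\psi\cdot h(\sin\psi)$ for some continuous $h$; this is an exact derivative on the circle and therefore vanishes. Only the two integrals with $\sin\psi$ in front of $\gamma_{ij}(-r\sin\psi)$ survive, and they share a common scalar factor which ends up multiplying the vector $[\cos\theta\ \sin\theta]^{T}=v/|v|$. This already delivers the claimed radial structure $\bar\gamma_{ij}(v)=\rho_{ij}(|v|)v/|v|$. One final reflection $\psi\mapsto\psi-\pi$ flips the sign inside $\gamma_{ij}$ while leaving the period intact, converting the surviving scalar into exactly the expression \eqref{eqn:keyg} for $\rho_{ij}(r)$.

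To round the lemma off I would verify that $\rho_{ij}$ really takes values in $\Real_{\geq 0}$: for $s>0$ the integrand in \eqref{eqn:keyg} equals $(s\sin\varphi)\gamma_{ij}(s\sin\varphi)/s$, which is pointwise nonnegative by condition (G1); at $s=0$ the integrand is identically zero. The only potential obstacle is keeping the trigonometric bookkeeping clean, in particular the signs coming from the reflection and the parity arguments that kill the $\cos\psi$-type integrals, but this is of the same routine flavour as the calculation already carried out for $\bar f$, so no genuinely new analytical difficulty is expected.
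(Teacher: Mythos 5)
Your proposal is correct and follows essentially the same route as the paper's proof: polar parameterization of $v$, a shift of the integration variable using $2\pi$-periodicity, angle-addition expansion, elimination of the $\cos\psi$-weighted integrals by symmetry, and identification of the surviving scalar with \eqref{eqn:keyg}. The only differences are cosmetic — the paper chooses $v=r[-\cos\theta\ \sin\theta]^{T}$ so no final half-period shift is needed, and it kills the cosine term by splitting $[0,2\pi]$ into two half-periods and using oddness rather than your exact-derivative argument; your added check that $\rho_{ij}\geq 0$ via (G1) is a harmless bonus that the paper defers to the proof of Claim~\ref{clm:rhoij}.
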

\begin{proof}
Given $v\in\Real^{2}$, let $r=|v|$ and $\theta\in[0,\,2\pi)$ be
such that $r[-\cos\theta\ \sin\theta]^{T}=v$. Then, by using
standard trigonometric identities,
\begin{eqnarray}\label{eqn:from}
\bar\gamma_{ij}(v) &=&\frac{1}{2\pi}\int_{0}^{2\pi}
\left[\!\!\begin{array}{r} -\sin\varphi \\ \cos\varphi
\end{array}\!\!\right]\gamma_{ij}
(r(\sin\varphi\cos\theta+\cos\varphi\sin\theta))d\varphi\nonumber\\
&=&\frac{1}{2\pi}\int_{0}^{2\pi}
\left[\!\!\begin{array}{r} -\sin\varphi \\ \cos\varphi
\end{array}\!\!\right]\gamma_{ij}
(r\sin(\varphi+\theta))d\varphi\nonumber\\
&=&\frac{1}{2\pi}\int_{0}^{2\pi}
\left[\!\!\begin{array}{r} -\sin(\varphi-\theta) \\ \cos(\varphi-\theta)
\end{array}\!\!\right]\gamma_{ij}
(r\sin\varphi)d\varphi\nonumber\\
&=&\frac{1}{2\pi}\int_{0}^{2\pi}
\left[\!\!\begin{array}{r} -\sin\varphi\cos\theta+\cos\varphi\sin\theta \\
\cos\varphi\cos\theta+\sin\varphi\sin\theta
\end{array}\!\!\right]\gamma_{ij}
(r\sin\varphi)d\varphi\nonumber\\
&=& \left( \frac{1}{2\pi}\int_{0}^{2\pi} \gamma_{ij}
(r\sin\varphi)\sin\varphi d\varphi\right)
\left[\!\!\begin{array}{r} -\cos\theta \\
\sin\theta
\end{array}\!\!\right]\nonumber\\
&&\qquad +\left( \frac{1}{2\pi}\int_{0}^{2\pi} \gamma_{ij}
(r\sin\varphi)\cos\varphi d\varphi\right)
\left[\!\!\begin{array}{c} \sin\theta \\
\cos\theta
\end{array}\!\!\right]
\end{eqnarray}
We focus on the second term in \eqref{eqn:from}. Observe
that
\begin{eqnarray*}
\int_{0}^{\pi}\gamma_{ij}(r\sin\varphi)\cos\varphi d\varphi
&=&\int_{-\pi/2}^{\pi/2}
\gamma_{ij}\left(r\sin\left(\varphi+\frac{\pi}{2}\right)\right)
\cos\left(\varphi+\frac{\pi}{2}\right)d\varphi\\
&=&0
\end{eqnarray*}
since the integrand is an odd function on the interval of integration.
Likewise,
\begin{eqnarray*}
\int_{\pi}^{2\pi}\gamma_{ij}(r\sin\varphi)\cos\varphi d\varphi
&=&\int_{-\pi/2}^{\pi/2}
\gamma_{ij}\left(r\sin\left(\varphi+\frac{3\pi}{2}\right)\right)
\cos\left(\varphi+\frac{3\pi}{2}\right)d\varphi\\
&=&0\,.
\end{eqnarray*}
Therefore
\begin{eqnarray}\label{eqn:from2}
\lefteqn{\int_{0}^{2\pi}\gamma_{ij}(r\sin\varphi)\cos\varphi d\varphi}\nonumber\\
&&=\int_{0}^{\pi}\gamma_{ij}(r\sin\varphi)\cos\varphi d\varphi
+\int_{\pi}^{2\pi}\gamma_{ij}(r\sin\varphi)\cos\varphi d\varphi\nonumber\\
&&=0\,.
\end{eqnarray}
Combining \eqref{eqn:from} and \eqref{eqn:from2} we obtain
\begin{eqnarray*}
\bar\gamma_{ij}(v) &=& \left( \frac{1}{2\pi}\int_{0}^{2\pi}
\gamma_{ij} (r\sin\varphi)\sin\varphi d\varphi\right)
\left[\!\!\begin{array}{r} -\cos\theta \\
\sin\theta
\end{array}\!\!\right]\\
&=&\rho_{ij}(|v|)\frac{v}{|v|}\,.
\end{eqnarray*}
Hence the result.
\end{proof}

\begin{claim}\label{clm:rhoij}
Defined in \eqref{eqn:keyg}, function $\rho_{ij}(s)\equiv 0$ iff
$\gamma_{ij}=0$. Otherwise, there exists $\alpha\in\K$ such that
$\rho_{ij}(s)\geq\alpha(s)$ for all $s$.
\end{claim}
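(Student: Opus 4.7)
The plan is to exploit condition (G1), which says $\gamma_{ij}$ is sign-preserving, to rewrite the integrand in \eqref{eqn:keyg} as a nonnegative function, and then use (G2) to bound it from below by a class-$\K$ function.

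First I would observe that for $s\geq 0$, the sign of $\gamma_{ij}(s\sin\varphi)$ agrees with the sign of $s\sin\varphi$, i.e.\ with the sign of $\sin\varphi$, by (G1). Consequently
\begin{eqnarray*}
\gamma_{ij}(s\sin\varphi)\sin\varphi
= |\gamma_{ij}(s\sin\varphi)|\,|\sin\varphi|\,\geq\, 0
\end{eqnarray*}
for all $\varphi\in[0,2\pi)$ and all $s\geq 0$, so the integrand in \eqref{eqn:keyg} is pointwise nonnegative. This immediately yields the easy direction: if $\gamma_{ij}=0$ then $\rho_{ij}(s)\equiv 0$.

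For the nontrivial direction, assume $\gamma_{ij}\neq 0$. By (G2), pick $\alpha\in\K$ with $|\gamma_{ij}(\sigma)|\geq\alpha(|\sigma|)$ for all $\sigma$. Then
\begin{eqnarray*}
\rho_{ij}(s)
= \frac{1}{2\pi}\int_{0}^{2\pi}|\gamma_{ij}(s\sin\varphi)|\,|\sin\varphi|\,d\varphi
\geq \frac{1}{2\pi}\int_{0}^{2\pi}\alpha(s|\sin\varphi|)\,|\sin\varphi|\,d\varphi\,.
\end{eqnarray*}
Now I would restrict the domain of integration to a set on which $|\sin\varphi|$ is bounded away from zero, e.g.\ $\Phi:=[\pi/6,5\pi/6]\cup[7\pi/6,11\pi/6]$, on which $|\sin\varphi|\geq 1/2$. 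Since the integrand is nonnegative everywhere, dropping the remainder of $[0,2\pi)$ and using monotonicity of $\alpha$ gives
\begin{eqnarray*}
\rho_{ij}(s)\geq \frac{1}{2\pi}\int_{\Phi}\alpha(s/2)\cdot \tfrac{1}{2}\,d\varphi
= \frac{1}{6}\alpha(s/2)\,=:\,\tilde\alpha(s)\,.
\end{eqnarray*}
Clearly $\tilde\alpha\in\K$ since $\alpha\in\K$, which proves the lower bound and, in particular, $\rho_{ij}\not\equiv 0$.

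The only subtlety is the very first step: turning $\gamma_{ij}(s\sin\varphi)\sin\varphi$ into something manifestly nonnegative using (G1). Once that sign analysis is done, the rest is a routine estimate obtained by throwing away a portion of the domain of integration to avoid the zeros of $\sin\varphi$ and to exchange the dependence on $\varphi$ inside $\alpha$ for an explicit constant.
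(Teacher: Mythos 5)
Your proof is correct and takes essentially the same route as the paper: rewrite the integrand as $|\gamma_{ij}(s\sin\varphi)|\,|\sin\varphi|$ via (G1), then bound it below using the class-$\K$ minorant from (G2). The only difference is cosmetic --- the paper simply notes that $s\mapsto\frac{1}{2\pi}\int_{0}^{2\pi}\alpha(s|\sin\varphi|)|\sin\varphi|\,d\varphi$ is itself class-$\K$, while you extract an explicit minorant by restricting to $\Phi$ (where, since $|\Phi|=4\pi/3$, the constant is actually $\tfrac{1}{3}$ rather than $\tfrac{1}{6}$, which only strengthens your bound).
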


\begin{proof}
That $\rho_{ij}(s)\equiv 0$ if $\gamma_{ij}=0$ is evident from the
definition. Suppose $\gamma_{ij}\neq 0$. Then by (G2) there exists
$\alpha_{1}\in\K$ such that $|\gamma_{ij}(s)|\geq
\alpha_{1}(|s|)$. Then we can write
\begin{eqnarray*}
\rho_{ij}(s)
&=&\frac{1}{2\pi}\int_{0}^{2\pi}
\gamma_{ij}(s\sin\varphi)\sin\varphi d\varphi\\
&=&\frac{1}{2\pi}\int_{0}^{2\pi}
|\gamma_{ij}(s\sin\varphi)||\sin\varphi| d\varphi\\
&\geq&\frac{1}{2\pi}\int_{0}^{2\pi}
\alpha_{1}(s|\sin\varphi|)|\sin\varphi|d\varphi=:\alpha_{2}(s)\,.
\end{eqnarray*}
Note that $\alpha_{2}$ is a class-$\K$ function. Hence the result.
\end{proof}
\\

In the remainder of the section we generate two results on average
array~\eqref{eqn:lienosccbar}.  In the first of those results
(Theorem~\ref{thm:B}) we establish that each of the
oscillators\footnote{We admit that the word {\em oscillator} may
have been an unfortunate choice here since there is no oscillation
(in the standard meaning of the word) taking place in the average
array. Averaging gets rid of oscillations.} in
\eqref{eqn:lienosccbar} eventually oscillates with some magnitude
no greater than some constant $\rho$ which only depends on
function $f$ and not on initial conditions. In the second result
(Theorem~\ref{thm:R}) we assert that if the initial conditions are
{\em right} then the oscillators eventually synchronize both in
phase and magnitude, where the magnitude equals $\rho$. Initial
conditions' being right roughly corresponds to the following
condition. If we depict each oscillator's initial phase with a
point on the unit circle then those points should all lie in an
open\footnote{That is, endpoints are not included.} semicircle.
Constant $\rho$ we mentioned above is indeed defined as the unique
positive number satisfying
\begin{eqnarray*}
\kappa(\rho)=0\,.
\end{eqnarray*}
Existence and uniqueness of $\rho$ is guaranteed by
Claim~\ref{clm:kappa}. Note then that the first result is about
the asymptotic behavior of the solutions of
system~\eqref{eqn:ellcbar} with respect to the following set
\begin{eqnarray*}
\B:=\{\eta\in\Real^{2m}:|\eta_{i}|\leq \rho\}\,.
\end{eqnarray*}
Likewise, the second result has to do with the asymptotic behavior
of the solutions of system~\eqref{eqn:ellcbar} with respect to
\begin{eqnarray*}
\R:=\{\eta\in\Real^{2m}:\eta_{i}=\eta_{j},\,|\eta_{i}|=\rho\
\mbox{for all}\ i,\,j\}\,.
\end{eqnarray*}

\begin{theorem}\label{thm:B}
Consider system~\eqref{eqn:ellcbar}. Set $\B$ is globally
asymptotically stable.
\end{theorem}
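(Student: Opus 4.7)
The plan is to use the non-smooth Lyapunov-type function $V(\eta) := \max_{1 \le i \le m} |\eta_i|$ together with two facts: $V$ strictly decreases along trajectories of \eqref{eqn:ellcbar} whenever $V(\eta) > \rho$, and $\B$ is forward invariant. The key intuition is that by Lemmas~\ref{lem:keyf}--\ref{lem:keyg} each coupling term $\bar\gamma_{ij}(\eta_j - \eta_i)$ is a nonnegative multiple of $\eta_j - \eta_i$ (so it points from $\eta_i$ toward $\eta_j$), while $-\bar f(\eta_i)$ points radially inward precisely when $|\eta_i| > \rho$, by Claim~\ref{clm:kappa}. The oscillator with the largest magnitude thus feels no outward push from the coupling and a strictly inward push from $-\bar f$ as soon as it sits outside the disk of radius $\rho$.

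First I would fix $t$, let $i$ be any index attaining $V(\eta(t)) = |\eta_i(t)| > 0$, and compute
\begin{eqnarray*}
\frac{d|\eta_i|}{dt} = \frac{\eta_i^T \dot\eta_i}{|\eta_i|} = -\kappa(|\eta_i|) + \sum_{j \ne i} \rho_{ij}(|\eta_j - \eta_i|)\, \frac{\eta_i^T(\eta_j - \eta_i)}{|\eta_i|\,|\eta_j - \eta_i|}.
\end{eqnarray*}
The decisive observation is that since $|\eta_i|$ is maximal, Cauchy--Schwarz gives $\eta_i^T \eta_j \le |\eta_i|\,|\eta_j| \le |\eta_i|^2$, whence $\eta_i^T(\eta_j - \eta_i) \le 0$; combined with $\rho_{ij} \ge 0$ from Claim~\ref{clm:rhoij}, this makes the coupling sum non-positive, so $d|\eta_i|/dt \le -\kappa(V(\eta))$ at every maximizing index. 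Invoking the standard expression for the upper Dini derivative of a pointwise maximum of smooth functions, I conclude that $D^+ V(\eta(t)) \le -\kappa(V(\eta(t)))$ holds whenever $V(\eta(t)) > 0$.

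Since $\kappa(s) > 0$ for $s > \rho$ and $\kappa(\rho) = 0$ by Claim~\ref{clm:kappa}, the bound $D^+ V \le -\kappa(V)$ combined with the comparison lemma yields both (i) forward invariance of $\B$---because starting with $V(\eta(0)) \le \rho$ the bound $D^+ V \le 0$ at any hypothetical first crossing of the level $\rho$ precludes $V$ from ever exceeding $\rho$---and (ii) $V(\eta(t)) \to \rho$ monotonically whenever $V(\eta(0)) > \rho$, at a rate bounded below by $\kappa(V)$ as long as $V$ is bounded away from $\rho$. Lyapunov stability of $\B$ then follows from the nonincreasing behaviour of $(V(\eta)-\rho)_+$ together with its equivalence with $|\eta|_\B$ (which one gets via $|\eta|_\B \le \sqrt{m}\,(V(\eta)-\rho)_+$ once $V \ge \rho$), while global attractivity follows from (ii). The main technical obstacle will be executing the non-smooth Dini-derivative step rigorously and handling the boundary level $|\eta_i| = \rho$ (where $\kappa = 0$) carefully enough to secure forward invariance of $\B$; both are standard, but they are the only places where care is actually required.
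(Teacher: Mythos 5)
Your proposal is correct and follows essentially the same route as the paper: the paper's proof also keys on the maximizing index, where $\eta_i^{T}(\eta_j-\eta_i)\le 0$ kills the coupling contribution and $-\kappa(|\eta_i|)|\eta_i|$ provides strict decrease outside the radius-$\rho$ disk, only it packages this with the Lyapunov function $V(\eta)=\frac{1}{2}\max\{0,\max_i(|\eta_i|^2-\rho^2)\}$ sandwiched by class-$\Kinf$ functions of $|\eta|_{\B}$ instead of your $\max_i|\eta_i|$ with a Dini-derivative/comparison argument. (Only a cosmetic slip: when $V(\eta(0))>\rho$ the quantity $V$ need not converge to $\rho$ monotonically after it reaches the level $\rho$, but since $(V-\rho)_+\to 0$ is all you use, nothing is affected.)
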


\begin{proof}
We will establish the result by constructing a Lyapunov function.
Claim~\ref{clm:kappa} implies that there exists $\alpha_{1}\in\K$
such that $\alpha_{1}(s-\rho)\leq\kappa(s)$ for $s\geq \rho$. Then
we can find $\alpha_{2}\in\K$ such that
\begin{eqnarray*}
\alpha_{2}(|\eta|_{\B})\leq\alpha_{1}\left(\max\{0,\,\max_{i}\{|\eta_{i}|-\rho\}\}\right)\max_{i}|\eta_{i}|\,.
\end{eqnarray*}
Let our candidate Lyapunov function $V:\Real^{2m}\to\Real_{\geq
0}$ be
\begin{eqnarray*}
V(\eta):=\frac{1}{2}\max\{0,\,\max_{i}\{|\eta_{i}|^{2}-\rho^{2}\}\}\,.
\end{eqnarray*}
Then there exist $\alpha_{3},\,\alpha_{4}\in\Kinf$ such that
\begin{eqnarray}\label{eqn:lyap1}
\alpha_{3}(|\eta|_{\B})\leq V(\eta)\leq\alpha_{4}(|\eta|_{\B})\,.
\end{eqnarray}
Now, given some $\eta$ with $|\eta|_{\B}>0$, let (nonempty) set of
indices $\I$ be such that
$\frac{1}{2}(|\eta_{i}|^{2}-\rho^{2})=V(\eta)$ iff $i\in\I$.
Observe that for $i\in\I$, point $\eta_{i}$ lies on the boundary
of the smallest disk (centered at the origin) that contains all
points $\eta_{j}$. Therefore $i\in\I$ implies
$\eta_{i}^{T}(\eta_{j}-\eta_{i})\leq 0$ for all $j$. Then by
Lemma~\ref{lem:keyf} and Lemma~\ref{lem:keyg} we can write (almost
everywhere)
\begin{eqnarray}\label{eqn:lyap2}
\langle\nabla V(\eta),\,{\bar g}(\eta)\rangle &=& \max_{i\in\I}\
\eta_{i}^{T}\left(-\bar f(\eta_{i})+\sum_{j\neq i}
\bar\gamma_{ij}(\eta_{j}-\eta_{i})\right)\nonumber\\
&=& \max_{i\in\I}\ \left(-\eta_{i}^{T}\bar f(\eta_{i})+\sum_{j\neq
i}
\eta_{i}^{T}\bar\gamma_{ij}(\eta_{j}-\eta_{i})\right)\nonumber\\
&=& \max_{i\in\I}\ \left(-\kappa(|\eta_{i}|)|\eta_{i}|+\sum_{j\neq
i}
\frac{\rho_{ij}(|\eta_{j}-\eta_{i}|)}{|\eta_{j}-\eta_{i}|}\eta_{i}^{T}(\eta_{j}-\eta_{i})\right)\nonumber\\
&\leq& -\kappa\left(\max_{i}|\eta_{i}|\right)\max_{i}|\eta_{i}|\nonumber\\
&\leq& -\alpha_{1}\left(\max_{i}\{|\eta_{i}|-r\}\right)\max_{i}|\eta_{i}|\nonumber\\
&\leq& -\alpha_{2}(|\eta|_{\B})\,.
\end{eqnarray}
Result follows from \eqref{eqn:lyap1} and \eqref{eqn:lyap2}.
\end{proof}

\begin{remark}
Theorem~\ref{thm:B} does not require interconnection
$\{\gamma_{ij}\}$ to be connected. This is clear with (or even
without) the proof.
\end{remark}

\begin{theorem}\label{thm:R}
Consider system~\eqref{eqn:ellcbar}. Set $\R$ is locally
asymptotically stable. In particular,
$\co\{\eta_{1}(0),\,\ldots,\,\eta_{m}(0)\}\cap\{0\}=\emptyset$
implies $\eta(t)\to\eta^{*}$ as $t\to\infty$ for some
$\eta^{*}\in\R$.
\end{theorem}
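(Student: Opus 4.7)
The plan is to execute the proof in three stages: (i) show that the open-semicircle configuration is forward invariant, (ii) establish consensus among the $\eta_{i}$, and (iii) identify the common limit as lying on the circle $|\eta|=\rho$. Local asymptotic stability of $\R$ will then follow immediately, since any sufficiently small neighborhood of $\R$ automatically satisfies the convex-hull hypothesis.

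For stage (i), since $0\notin\co\{\eta_{i}(0)\}$ the separating hyperplane theorem yields a unit vector $u\in\Real^{2}$ with $m_{0}:=\min_{i} u^{T}\eta_{i}(0)>0$. The key structural observation from Lemma~\ref{lem:keyg} is that $\bar\gamma_{ij}(\eta_{j}-\eta_{i})$ is a non-negative scalar multiple of $\eta_{j}-\eta_{i}$. Hence at any index $i^{*}(t)$ attaining $m(t):=\min_{i} u^{T}\eta_{i}(t)$, every coupling contribution to $\tfrac{d}{dt}(u^{T}\eta_{i^{*}})$ is non-negative, while the radial contribution $-\kappa(|\eta_{i^{*}}|)\,u^{T}\eta_{i^{*}}/|\eta_{i^{*}}|$ is uniformly bounded using the a priori bound $\max_{i}|\eta_{i}(t)|\leq R_{0}:=\max\{\rho,\max_{i}|\eta_{i}(0)|\}$ guaranteed by Theorem~\ref{thm:B}. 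A Dini-derivative/comparison argument then yields $\inf_{t\geq 0}m(t)>0$, so the trajectory stays in a pointed cone bounded away from the origin.

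For stage (ii), within this invariant cone the coupling weights $w_{ij}(t):=\rho_{ij}(|\eta_{j}-\eta_{i}|)/|\eta_{j}-\eta_{i}|$ are non-negative, and Claim~\ref{clm:rhoij} gives a uniform positive lower bound on any compact subset of the trajectory where $|\eta_{j}-\eta_{i}|$ is bounded away from zero whenever $\gamma_{ij}\neq 0$. Combined with the connectedness of $\{\gamma_{ij}\}$, I apply a Moreau-style max--min Lyapunov argument to the projective disagreements $V_{u}(\eta):=\max_{i} u^{T}\eta_{i}-\min_{i} u^{T}\eta_{i}$ and $V_{v}(\eta)$ for a second direction $v$ not collinear with $u$ (so as to upgrade projective consensus to full planar consensus), together with LaSalle's invariance principle. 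For stage (iii), on the resulting consensus manifold the coupling vanishes and the common state $\eta^{\mathrm{c}}$ obeys $\dot\eta^{\mathrm{c}}=-\bar f(\eta^{\mathrm{c}})=-\kappa(|\eta^{\mathrm{c}}|)\eta^{\mathrm{c}}/|\eta^{\mathrm{c}}|$, whose direction is preserved and whose magnitude converges to $\rho$ by Claim~\ref{clm:kappa}; together with the cone invariance of stage (i), this gives $\eta(t)\to\eta^{*}$ for some $\eta^{*}\in\R$.

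The main obstacle is stage (ii). The max--min argument must be adapted to a directed graph with state-dependent weights that can vanish as two neighbors coincide, and the radial drift $-\kappa(|\eta_{i}|)\,u^{T}\eta_{i}/|\eta_{i}|$ changes sign during the transient, actively working against a decrease of $\max_{i} u^{T}\eta_{i}$ whenever $|\eta_{i}|<\rho$. Both difficulties are resolved by invoking LaSalle on the pre-compact forward trajectory: on the $\omega$-limit set Theorem~\ref{thm:B} confines $|\eta_{i}|$ to $[0,\rho]$, and the connectedness of the interconnection combined with the uniform lower bound from Claim~\ref{clm:rhoij} forces any non-zero disagreement to produce a strict decrease of the chosen Lyapunov function, contradicting invariance and thereby pinning the $\omega$-limit set within the consensus manifold.
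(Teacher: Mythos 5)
Your stage (ii) is the heart of the matter, and the device you propose there does not work. Both the Moreau max--min argument and LaSalle's principle require a quantity that is nonincreasing along the trajectory, and your disagreement functions $V_u,\,V_v$ are not: whenever $|\eta_{i}|<\rho$ the drift $-\bar f(\eta_{i})=-\kappa(|\eta_{i}|)\eta_{i}/|\eta_{i}|$ points \emph{outward} (Lemma~\ref{lem:keyf}, Claim~\ref{clm:kappa}), i.e.\ out of the convex hull of the agents. Take $m=2$ with both states inside the disk of radius $\rho$ and a weak coupling ($\rho_{12},\rho_{21}$ small compared with $-\kappa$ there): the expansive radial field makes $|\eta_{1}-\eta_{2}|$, and your projective spreads, strictly increase. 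Pushing the argument to the $\omega$-limit set does not repair this, because Theorem~\ref{thm:B} confines that set to $\B$, which is exactly the region where the radial drift is outward; so your claim that ``any non-zero disagreement produces a strict decrease of the chosen Lyapunov function'' is false, and a non-consensus invariant set inside $\B$ is not excluded by your functions. The paper's resolution is precisely to abandon the convex hull: it measures disagreement by the smallest cone $\C(\eta)$ (and $\rho$-wedge $\W_{\rho}(\eta)$) containing the agents. Since the radial term is parallel to rays through the origin it can never leave a wedge, so $\W_{\rho}(\eta(t))^{m}$ is forward invariant and $t\mapsto\angle\C(\eta(t))$ \emph{is} monotone; a connectivity-plus-counting contradiction argument (repeatedly peeling agents off a boundary half line) then forces the limiting angle to zero. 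Some monotone surrogate of this kind is the missing idea in your outline.

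Three further steps are asserted rather than proved. First, in stage (i) the adverse radial term at the minimizer is only bounded by $-c\,u^{T}\eta_{i^{*}}$ with $c=\kappa(R_{0})/\rho$, so your comparison gives $m(t)\geq m(0)e^{-ct}$, which does not yield $\inf_{t\geq0}m(t)>0$; you need an extra ingredient (e.g.\ integrating the rate against the Lyapunov function of Theorem~\ref{thm:B}, or the wedge invariance itself), and in any case a half-plane bound is not the ``pointed cone'' you later invoke. Second, even granting convergence to the consensus set intersected with $\{|v|=\rho\}$, the theorem asserts convergence to a \emph{single} point $\eta^{*}\in\R$; since every point of that circle is an equilibrium, the $\omega$-limit set could a priori be an arc, and nothing in your stages fixes the limiting direction --- the paper gets it from the nested shrinking cones, which identify a limiting half line $\setH$ and then a largest invariant set $\{\rho u\}^{m}$ in the resulting segment. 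Third, local asymptotic stability does not ``follow immediately'' from attractivity on the hull condition: Lyapunov stability requires a uniform transient bound, which the paper obtains from forward invariance of $\W_{\rho}(\eta)^{m}$ together with $\W_{\rho}(\eta)^{m}\to\{\eta^{*}\}$ as $\eta\to\eta^{*}$; your proposal offers no substitute for this step.
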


To prove the theorem we use an invariance principle (similar to
that of LaSalle's) for which we need to tailor an invariant set
for our system. To this end we introduce some notation. Given
$\eta\in\Real^{2m}$ with
$\co\{\eta_{1},\,\ldots,\,\eta_{m}\}\cap\{0\}=\emptyset$ we let
$\C(\eta)\subset\Real^{2}$ and $\W_{\rho}(\eta)\subset\Real^{2}$
respectively be the smallest cone and smallest $\rho$-wedge
containing set $\{\eta_{1},\,\ldots,\,\eta_{m}\}$. (Recall that
$\rho>0$ satisfies $\kappa(\rho)=0$.) Fig.~5 depicts
$\W_{\rho}(\eta)$. Note that
\begin{eqnarray}\label{eqn:notethat}
\W_{\rho}(\eta)\subset\C(\eta)\,.
\end{eqnarray}
For $\zeta\in\Real^{2m}$, when we write
$\zeta\in\W_{\rho}(\eta)^{m}$ we mean that
$\zeta_{i}\in\W_{\rho}(\eta)$ for all $i=1,\,\ldots,\,m$. Note
that
\begin{eqnarray}\label{eqn:implies}
\zeta\in\W_{\rho}(\eta)^{m} \implies
\W_{\rho}(\zeta)\subset\W_{\rho}(\eta)\,.
\end{eqnarray}

\begin{figure}[h]
\begin{center}
\includegraphics[scale=0.6]{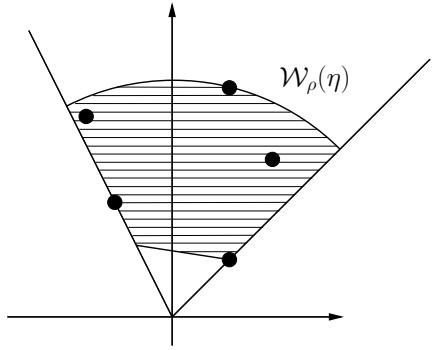}
\caption{An example with $\eta\in\Real^{10}$. Dots represent the
positions of $\eta_{i}$ on plane. The shaded region is
$\W_{\rho}(\eta)$.}
\end{center}
\end{figure}

\noindent {\bf Proof of Theorem~\ref{thm:R}.} We first show local
stability of $\R$. Let $\eta\in\Real^{2m}$ be such that
$\co\{\eta_{1},\,\ldots,\,\eta_{m}\}\cap\{0\}=\emptyset$. Consider
the first term of the righthand side in \eqref{eqn:lienosccbar}.
Lemma~\ref{lem:keyf} and Claim~\ref{clm:kappa} tell us that vector
$-\bar{f}(\eta_{i})$ (tail placed at point $\eta_{i}$) points
toward the origin if $|\eta_{i}|>\rho$ and away from the origin if
$|\eta_{i}|<\rho$. Therefore, for $\eta_{i}$ on the boundary of
$\W_{\rho}(\eta)$, vector $-\bar{f}(\eta_{i})$ never points
outside $\W_{\rho}(\eta)$. Now consider the sum term in
\eqref{eqn:lienosccbar}. Lemma~\ref{lem:keyg} and
Claim~\ref{clm:rhoij} tell us that each summand
$\bar\gamma_{ij}(\eta_{j}-\eta_{i})$, if nonzero, is a vector
pointing from $\eta_{i}$ to $\eta_{j}$. Hence sum $\sum_{j\neq
i}\bar\gamma_{ij}(\eta_{j}-\eta_{i})$ cannot be pointing outside
convex hull $\co\{\eta_{1},\,\ldots,\,\eta_{m}\}$ for $\eta_{i}$
on the boundary of the convex hull. Since
$\co\{\eta_{1},\,\ldots,\,\eta_{m}\}\subset\W_{\rho}(\eta)$ we
deduce therefore that the righthand side of
\eqref{eqn:lienosccbar}, that is, vector $-\bar
f(\eta_{i})+\sum_{j\neq i} \bar\gamma_{ij}(\eta_{j}-\eta_{i})$,
never points outside $\W_{\rho}(\eta)$ for $\eta_{i}$ on the
boundary of $\W_{\rho}(\eta)$. Hence compact set
$\W_{\rho}(\eta)^{m}$ is forward invariant with respect to system
\eqref{eqn:ellcbar}. Observe also that for $\eta^{*}\in\R$ we have
$\W_{\rho}(\eta)^{m}\to\{\eta^{*}\}$ as $\eta\to\eta^{*}$. Hence
set $\R$ is locally stable. Next we show attractivity.

Let $G$ denote the graph of interconnection $\{\gamma_{ij}\}$.
Note that $G$ is connected by assumption. By Lemma~\ref{lem:keyg}
and Claim~\ref{clm:rhoij} we know that $\rho_{ij}$ is continuous
and zero at zero. Moreover, if there is no edge of $G$ from node
$i$ to node $j$ then $\rho_{ij}(s)\equiv 0$. Otherwise there
exists $\alpha\in\K$ such that $\rho_{ij}(s)\geq\alpha(s)$ for all
$s$.

Consider system~\eqref{eqn:ellcbar}. Let
$\co\{\eta_{1}(0),\,\ldots,\,\eta_{m}(0)\}\cap\{0\}=\emptyset$.
Due to \eqref{eqn:implies} and forward invariance of
$\W_{\rho}(\eta(t))^{m}$, for $t_{2}\geq t_{1}\geq 0$ we can write
$\W_{\rho}(\eta(t_{2}))\subset\W_{\rho}(\eta(t_{1}))$. By
\eqref{eqn:notethat} map $t\mapsto\angle\C(\eta(t))$ is hence
nonincreasing. It is also bounded from below by definition.
Therefore there exists $\theta\in[0,\,\pi)$ such that
$\lim_{t\to\infty}\angle\C(\eta(t))=\theta$. We now claim that
$\theta=0$.

Suppose not. Then, by continuity, there must exist a solution
$\zeta(\cdot)$ to system~\eqref{eqn:ellcbar} with
$\zeta(0)\in\W_{\rho}(\eta(0))^{m}$ such that
$\angle\C(\zeta(t))=\theta$ for all $t\geq 0$. Recall that
$\W_{\rho}(\zeta(0))$ is forward invariant. By
\eqref{eqn:notethat} therefore
\begin{eqnarray}\label{eqn:therefore}
\C(\zeta(t))=\C(\zeta(0))\qquad \forall t\geq 0\,.
\end{eqnarray}
Let $\setH_{1}$ and $\setH_{2}$ be two half lines convex hull of
which equals $\C(\zeta(0))$. Since $\theta>0$, we have
$\setH_{1}\neq\setH_{2}$. Then we observe that
$\{\zeta_{1}(0),\,\ldots,\,\zeta_{m}(0)\}\cap\setH_{i}\neq\emptyset$
for $i=1,\,2$. Let $n_{1},\,\ldots,\,n_{m}$ denote the nodes of
graph $G$. Since $G$ is connected we can find a node $n_{l}$ to
which there exists a path from every other node. Now, without loss
of generality, assume that $\zeta_{l}(0)\notin\setH_{1}$. Let
$\I_{1}(t)$ denote the set of indices at time $t\geq 0$ such that
$\zeta_{i}(t)\in\setH_{1}$ iff $i\in\I_{1}(t)$. Clearly,
$l\notin\I_{1}(0)$ and $1\leq \#\I_{1}(0)\leq m-1$. Hence
connectedness of $G$ implies that there exist $i_{0}\in\I_{1}(0)$
and $j_{0}\in\{1,\,\ldots,\,m\}\setminus\I_{1}(0)$ such that
$(n_{i_{0}},\,n_{j_{0}})$ is an edge of $G$. Existence of edge
$(n_{i_{0}},\,n_{j_{0}})$ implies that $\gamma_{i_{0}j_{0}}\neq
0$. Now, observe that $\zeta_{j_{0}}(0)-\zeta_{i_{0}}(0)\neq 0$
since $\zeta_{i_{0}}(0)\in\setH_{1}$ and
$\zeta_{j_{0}}(0)\notin\setH_{1}$. By Lemma~\ref{lem:keyg},
$\bar\gamma_{i_{0}j_{0}}(\zeta_{j_{0}}(0)-\zeta_{i_{0}}(0))$ is a
nonzero vector that is not parallel to $\setH_{1}$. By
\eqref{eqn:lienosccbar} we can write
\begin{eqnarray*}
{\dot\zeta}_{i_{0}}(0)=v+\bar\gamma_{i_{0}j_{0}}(\zeta_{j_{0}}(0)-\zeta_{i_{0}}(0))
\end{eqnarray*}
where
\begin{eqnarray*}
v=-\bar f(\zeta_{i_{0}}(0))+\sum_{j\neq i_{0},j_{0}}
\bar\gamma_{i_{0}j}(\zeta_{j}(0)-\zeta_{i_{0}}(0))\,.
\end{eqnarray*}
By earlier arguments (see the first paragraph of the proof) we
know that vector $v$ (tail placed at point $\zeta_{i}(0)$) cannot
point outside $\C(\zeta(0))$. Therefore vector
${\dot\zeta}_{i_{0}}(0)$ is nonzero and not parallel to
$\setH_{1}$. This lets us be able to find $t_{1}>0$ such that
$\zeta_{i_{0}}(t_{1})\notin\setH_{1}$ and $\#\I_{1}(t_{1})\leq
\#\I_{1}(0)-1$. We can continue the procedure and find a sequence
of instants $t_{k}>0$ such that $\#\I_{1}(t_{k})\leq
\#\I_{1}(0)-k$. Since $\#\I_{1}(0)$ is finite, there should exist
$t^{*}>0$ such that $\#\I_{1}(t^{*})=0$, that is,
$\I_{1}(t^{*})=\emptyset$. This observation translates to
$\setH_{1}\cap\{\zeta_{1}(t^{*}),\,\ldots,\,\zeta_{m}(t^{*})\}=\emptyset$,
which yields $\C(\zeta(t^{*}))\neq\C(\zeta(0))$ which contradicts
with \eqref{eqn:therefore}. Hence our claim is valid and
$\lim_{t\to\infty}\angle\C(\eta(t))=0$.

That $\angle\C(\eta(t))\to 0$ means that $\C(\eta(t))\to\setH$ for
some half line $\setH$. We also know that
$\eta(t)\in\W_{\rho}(\eta(0))^{m}$ for all $t\geq 0$ and, by
Theorem~\ref{thm:B}, $\eta(t)\to\B$. Therefore solutions
$\eta_{i}(t)$, for $i=1,\,\ldots,\,m$, converge to the following
set
\begin{eqnarray*}
{\mathcal
S}:=\setH\cap\W_{\rho}(\eta(0))\cap\{v\in\Real^{2}:|v|\leq
\rho\}\,.
\end{eqnarray*}
Note that ${\mathcal S}$ is a line segment that satisfies
${\mathcal S}=\{\lambda u:\delta\leq \lambda\leq \rho\}$ for some
unit vector $u\in\Real^{2}$ and $\delta>0$. Solution $\eta(\cdot)$
will converge to the largest invariant set in ${\mathcal S}^{m}$.
(Note that ${\mathcal S}^{m}$ itself is forward invariant.) Take
any solution $\zeta(\cdot)$ to system~\eqref{eqn:ellcbar} with
$\zeta(0)\in{\mathcal S}^{m}$. At any given time $t\geq 0$ let $i$
be such that point $\zeta_{i}(t)$ is farthest from the point $\rho
u$. Then by Lemma~\ref{lem:keyf}, Claim~\ref{clm:kappa},
Lemma~\ref{lem:keyg}, and Claim~\ref{clm:rhoij} we have
\begin{eqnarray*}
\dot\zeta_{i}(t)=\left\{
\begin{array}{ccc}
0&\mbox{if}&\zeta_{i}(t)=\rho u\\
\lambda u&\mbox{if}&\zeta_{i}(t)\neq \rho u
\end{array}
\right.
\end{eqnarray*}
for some $\lambda\geq -\kappa(|\zeta_{i}(t)|)>0$. Therefore
$\max_{i}|\zeta_{i}(t)- \rho u|\to 0$ as $t\to\infty$. We then
deduce that $\{\rho u\}^{m}$ is the largest invariant set in
${\mathcal S}^{m}$. Hence the result. \hfill\null$\blacksquare$

\begin{remark}
In Theorem~\ref{thm:R} we require condition
$\co\{\eta_{1}(0),\,\ldots,\,\eta_{m}(0)\}\cap\{0\}=\emptyset$ for
synchronization. If we express each $\eta_{i}$ in polar
coordinates $(r_{i},\,\theta_{i})$, where $r_{i}\geq 0$ and
$\theta_{i}\in[0,\,2\pi)$, then the required condition is
equivalent to that $r_{i}(0)\neq 0$ and
$\theta_{i}(0)\in(\theta^{*},\,\theta^{*}+\pi)$ for all $i$ and
some $\theta^{*}$. In other words, if we denote each phase
$\theta_{i}(0)$ by a point on the unit circle, then the condition
is equivalent to that they all lie in an open semicircle. Is this
condition really essential for synchronization? Theoretically
speaking, yes. That is, one can easily find initial conditions
that make an equilibrium but are not on the synchronization
manifold. However those equilibria may be practically
disregardable if they are unstable, since slightest disturbance
would rescue the system from being stuck at those points. Now we
ask the second question. Does there exist a stable equilibrium
outside synchronization manifold? This we find difficult to
answer. We would nevertheless like to report that our attempts to
hunt one via (limited) simulations on coupled van der Pol
oscillators have failed.
\end{remark}

Let us recapitulate what has hitherto been done. We have begun by
an array of coupled Lienard oscillators
$\dot\xi=\ell(\xi,\,\omega)$ with frequency of oscillations
$\omega$. By looking at it from a rotating (at frequency $\omega$)
reference frame we have obtained new array $\dot
x=\ellc(x,\,\omega t)$ whose righthand side is periodic in time.
Exploiting this periodicity, we have then computed average
dynamics $\dot \eta=\ellcbar(\eta)$. Finally we have shown for the
average array that if oscillators' initial phases all reside in an
open semicircle then they synchronize both in phase and magnitude,
where the magnitude should equal $\rho$.

In the next section, based on our findings on the synchronization
behavior of average array, we show that oscillators of array $\dot
x=\ellc(x,\,\omega t)$ and therefore of array
$\dot\xi=\ell(\xi,\,\omega)$ get arbitrarily close to
synchronization for $\omega$ large enough. We use tools from
averaging theory to establish the results.

\section{Synchronization of Lienard oscillators}\label{sec:sync}

The following definition is borrowed with slight modification from
\cite{teel99}.

\begin{definition}
Consider system ${\dot x}=g(x,\,\omega,\, t)$. Closed set
${\mathcal S}$ is said to be {\em semiglobally practically
asymptotically stable (with respect to $\omega$)} if for each pair
$(\Delta,\,\delta)$ of positive numbers, there exists
$\omega^{*}>0$ such that for all $\omega\geq\omega^{*}$ the
following hold.
\begin{itemize}
\item[(a)] For each $r>\delta$ there exists $\varepsilon>0$ such
that
\begin{equation*}
|x(t_0)|_{\mathcal S}\leq \varepsilon \implies |x(t)|_{\mathcal
S}\leq r \quad \forall t\geq t_0\,.
\end{equation*}
\item[(b)] For each $\varepsilon<\Delta$ there exists $r>0$ such
that
\begin{equation*}
|x(t_0)|_{\mathcal S}\leq \varepsilon \implies  |x(t)|_{\mathcal
S}\leq r \quad \forall t\geq t_0\,.
\end{equation*}
\item[(c)] For each $r<\Delta$ and $\varepsilon>\delta$ there
exists $T>0$ such that
\begin{equation*}
|x(t_0)|_{\mathcal S}\leq r \implies |x(t)|_{\mathcal
S}\leq\varepsilon \quad \forall t\geq t_{0}+T\,.
\end{equation*}
\end{itemize}
\end{definition}

Result \cite[Thm.~2]{teel99} tells that the origin of a
time-varying system is semiglobally practically asymptotically
stable provided that the origin of the average system is globally
asymptotically stable. The analysis therein, with almost no extra
effort, can be extended to cover the case where the attractor is
not a singleton but only compact. Note that set $\B$ is compact.
Theorem~\ref{thm:B} therefore yields the following result.

\begin{theorem}\label{thm:mainB}
Consider system~\eqref{eqn:ellc}. Set $\B$ is semiglobally
practically asymptotically stable.
\end{theorem}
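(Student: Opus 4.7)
The plan is to apply the averaging result \cite[Thm.~2]{teel99}, in its extension to compact attractors, using Theorem~\ref{thm:B} as the hypothesis on the average dynamics. First I would verify the standard hypotheses required for averaging. The right-hand side $\ellc(x,\omega t)$ is $(2\pi/\omega)$-periodic in $t$ by inspection of \eqref{eqn:lienoscc}, and its time-average is precisely $\ellcbar$ by construction in Section~\ref{sec:average}. Moreover $\ellc$ is locally Lipschitz in $x$ uniformly in $t$: the trigonometric coefficients are smooth and bounded by one, so local Lipschitz continuity is inherited from that of $f$ and of each $\gamma_{ij}$.

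Granting the extended averaging theorem, the conclusion is then immediate. Theorem~\ref{thm:B} gives the compact set $\B$ globally asymptotically stable for $\dot\eta = \ellcbar(\eta)$, and the theorem therefore returns semiglobal practical asymptotic stability of $\B$ (with respect to $\omega$) for the time-varying system \eqref{eqn:ellc}, which is exactly the claim.

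The only nonroutine point, and the place where I would invest real work, is the promised extension of \cite[Thm.~2]{teel99} from a singleton attractor to a compact one. The argument in \cite{teel99} combines two ingredients: a smooth converse Lyapunov function $V$ for the average system, and the $O(1/\omega)$ error estimate
\begin{equation*}
\left|\int_{t}^{t+2\pi/\omega}\!\bigl(\ellc(x,\omega s)-\ellcbar(x)\bigr)\,ds\right| \le \frac{c(|x|)}{\omega}
\end{equation*}
obtained by integrating the zero-mean periodic part over one period. Both carry over to a compact attractor $\B$: a standard converse Lyapunov theorem for compact globally asymptotically stable sets (as in Lin--Sontag--Wang) supplies a smooth $V$ with class-$\Kinf$ bounds $\alpha_{1}(|\eta|_\B)\leq V(\eta)\leq\alpha_{2}(|\eta|_\B)$ and decrease rate $\langle\nabla V,\ellcbar\rangle\leq -\alpha_{3}(|\eta|_\B)$, while the error bound above does not involve the attractor at all. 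Replacing $|\cdot|$ by $|\cdot|_{\B}$ throughout the estimates of \cite{teel99} leaves every step structurally intact, and items (a)--(c) of the definition of semiglobal practical asymptotic stability drop out unchanged.
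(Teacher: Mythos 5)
Your proposal matches the paper's own argument: the paper likewise obtains this theorem by invoking \cite[Thm.~2]{teel99}, noting that its proof extends with little effort from a point attractor to the compact set $\B$, and then applying Theorem~\ref{thm:B} for the average system \eqref{eqn:ellcbar}. The extra detail you supply (periodicity and Lipschitz checks, and the converse-Lyapunov route for the compact-attractor extension) is consistent with, and slightly more explicit than, what the paper records.
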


The following result is an adaptation of a general theorem on
averaging \cite[Thm.~2.8.1]{sanders07}.

\begin{lemma}\label{lem:averaging}
Let map $g:\Real^{n}\times\Real\to\Real^{n}$ be locally Lipschitz
and satisfy $g(x,\,\varphi+2\pi)=g(x,\,\varphi)$ for all
$x\in\Real^{n}$ and $\varphi\in\Real$. Define average function
${\bar g}:\Real^{n}\to\Real^{n}$ as
\begin{eqnarray*}
{\bar
g}(x):=\frac{1}{2\pi}\int_{0}^{2\pi}g(x,\,\varphi)d\varphi\,.
\end{eqnarray*}
Let $x(\cdot)$ and $\eta(\cdot)$ denote, respectively, the
solutions of systems $\dot{x}=g(x,\,\omega t)$ and
$\dot{\eta}=\bar{g}(\eta)$, where $\omega>0$. Then for each
compact set $\D\subset\Real^{n}$ and pair of positive real numbers
$(T,\,\varepsilon)$ there exists $\omega^{*}>0$ such that if
\begin{itemize}
\item $\omega\geq\omega^{*}$, \item $\eta(0)=x(0)$, and \item
$x(t)\in\D$ for all $t\in[0,\,T]$
\end{itemize}
hold then $|\eta(t)-x(t)|\leq\varepsilon$ for all $t\in[0,\,T]$.
\end{lemma}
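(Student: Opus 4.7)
The plan is to use the classical Bogoliubov near-identity transformation, which is the essential content of \cite[Thm.~2.8.1]{sanders07}. First I would replace $\D$ by a slightly enlarged compact set $\D'\supset\D$ (say, a closed $\delta$-neighbourhood) in order to leave room around the trajectory, and let $L>0$ be a common Lipschitz constant, uniform in $\tau$, of $g(\cdot,\tau)$ and of $\bar g(\cdot)$ on $\D'$ (the first exists by local Lipschitzness combined with $2\pi$-periodicity; the second follows, since averaging preserves Lipschitzness). Next I would introduce the auxiliary function
$$u(x,\tau)\,:=\,\int_{0}^{\tau}\bigl[g(x,s)-\bar g(x)\bigr]\,ds,$$
which is $2\pi$-periodic in $\tau$ because $g(x,\cdot)-\bar g(x)$ has zero mean over a period, continuous, and Lipschitz in $x$ uniformly in $\tau$. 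Hence there is a constant $M>0$ with $|u(x,\tau)|\leq M$ on $\D'\times\Real$.

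Then I would perform the near-identity change of variables $y=x-\omega^{-1}u(x,\omega t)$. For $\omega$ large this is a Lipschitz homeomorphism on $\D'$ (a $1/\omega$-perturbation of the identity) and gives $|y(t)-x(t)|\leq M/\omega$. A formal computation using $\partial u/\partial\tau = g-\bar g$ exactly cancels the oscillatory part of $\dot x$, leaving
$$\dot y\,=\,\bar g(x)\,+\,\tfrac{1}{\omega}R(x,\omega t)\,=\,\bar g(y)\,+\,\tfrac{1}{\omega}\tilde R(t),$$
where both $R$ and $\tilde R$ remain uniformly bounded on the relevant set by a constant depending only on $L$ and $M$.

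Finally I would compare $y(\cdot)$ with the averaged solution $\eta(\cdot)$. Since $y(0)-\eta(0)=-\omega^{-1}u(x(0),0)$ has norm at most $M/\omega$, and the two ODEs differ only by the $O(1/\omega)$ term $\tilde R/\omega$, Gronwall's inequality on $[0,T]$ yields
$$|y(t)-\eta(t)|\,\leq\,\frac{C\,e^{LT}}{\omega} \qquad \forall\,t\in[0,T],$$
for a constant $C$ depending only on $\D'$, $L$, $M$. Combining this with $|x(t)-y(t)|\leq M/\omega$ and choosing $\omega^{*}$ so that the sum is $\leq\varepsilon$ finishes the argument. Along the way one must verify inductively (or by a standard bootstrap on $[0,T]$) that, for $\omega$ large enough, $y(t)$ stays inside $\D'$ so the uniform Lipschitz estimates apply.

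The main technical obstacle is the fact that $g$ is assumed only locally Lipschitz, not $C^{1}$, so the partial $\partial u/\partial x$ used in the formal differentiation of $y$ may fail to exist classically. The cleanest workaround is to regularize $g$ by Friedrichs mollifiers $g^{\epsilon}$ (still $2\pi$-periodic, with $\bar{g^{\epsilon}}\to\bar g$ and $g^{\epsilon}\to g$ uniformly on $\D'\times[0,2\pi]$), carry out the Gronwall estimate for the smooth system, and pass to the limit $\epsilon\to 0$ using continuous dependence of solutions on the vector field; alternatively, one can work throughout with the integral form of the transformed equation, thereby avoiding pointwise differentiation of $u$ in $x$ entirely.
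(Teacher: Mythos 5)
Your argument is essentially correct, but note that the paper does not actually prove this lemma: it is stated as an adaptation of the general averaging theorem \cite[Thm.~2.8.1]{sanders07} and the proof is deferred to that reference. What you have done is reconstruct, in a self-contained way, the classical proof of the cited result: the Bogoliubov near-identity transformation $y=x-\omega^{-1}u(x,\omega t)$ with $u(x,\tau)=\int_0^\tau[g(x,s)-\bar g(x)]\,ds$ (periodic, hence bounded on $\D'\times\Real$), a Gronwall comparison of $y$ with $\eta$ on $[0,T]$, and the $O(1/\omega)$ closeness of $y$ to $x$; after rescaling $\tau=\omega t$ this is exactly the standard ``error $O(\varepsilon)$ on time scale $1/\varepsilon$'' statement with $\varepsilon=1/\omega$, restricted to trajectories confined to the compact set $\D$, which is precisely the adaptation the paper needs. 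You also correctly flag the two delicate points: the bootstrap keeping $y$ (and, via the same estimate, $\eta$) inside the enlarged compact set $\D'$ so the uniform Lipschitz constants apply, and the fact that $g$ is only locally Lipschitz, so $u$ need not be differentiable in $x$; mollification with Lipschitz constants uniform in the mollification parameter, or working with the integral form, both repair this. A slightly cleaner route that avoids the transformation (and hence the differentiability issue) entirely is the direct Besjes-type estimate: write $x(t)-\eta(t)=\int_0^t[g(x(s),\omega s)-\bar g(x(s))]\,ds+\int_0^t[\bar g(x(s))-\bar g(\eta(s))]\,ds$, bound the first integral by $O(1/\omega)$ by partitioning $[0,t]$ into subintervals of length $2\pi/\omega$ and using the zero mean of $g(x,\cdot)-\bar g(x)$ together with the boundedness of $g$ on $\D\times[0,2\pi]$ (so $x(\cdot)$ moves only $O(1/\omega)$ per subinterval), then apply Gronwall to the second; this needs nothing beyond continuity and Lipschitzness and is in the spirit of the proof of the cited theorem. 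In short, your proposal buys a self-contained verification of the lemma at the cost of some technical care, whereas the paper simply invokes the known averaging theorem.
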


Lemma~\ref{lem:averaging} lets us establish the following result.

\begin{theorem}\label{thm:mainR}
Consider system~\eqref{eqn:ellc}. For each pair $(\D,\,\delta)$,
where $\D\subset\Real^{2}$ is a compact convex set that does not
include the origin and $\delta>0$, there exists $\omega^{*}>0$
such that for all $\omega\geq\omega^{*}$ the following hold.
\begin{itemize}
\item[(a)] There exists $\varepsilon>0$ such that
\begin{equation*}
|x(0)|_{\R}\leq \varepsilon \implies |x(t)|_{\R}\leq \delta \quad
\forall t\geq 0\,.
\end{equation*}
\item[(b)] There exists $r>0$ such that
\begin{equation*}
x(0)\in\D^{m} \implies  |x(t)|_{\R}\leq r \quad \forall t\geq 0\,.
\end{equation*}
\item[(c)] There exists $T>0$ such that
\begin{equation*}
x(0)\in\D^{m} \implies |x(t)|_{\R}\leq\delta \quad \forall t\geq
T\,.
\end{equation*}
\end{itemize}
\end{theorem}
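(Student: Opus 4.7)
The plan is to derive Theorem~\ref{thm:mainR} from Theorem~\ref{thm:R} along exactly the route by which Theorem~\ref{thm:mainB} is obtained from Theorem~\ref{thm:B}: pair the asymptotic-stability result for the averaged system with the finite-horizon averaging estimate of Lemma~\ref{lem:averaging}. The new wrinkle is that $\R$ is only \emph{locally} asymptotically stable for $\dot\eta=\ellcbar(\eta)$, so the argument must carry along an explicit region of attraction---encoded by the convex set $\D$---through every step.

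First I would extract a uniform finite-time convergence statement for the averaged array. From the proof of Theorem~\ref{thm:R}, whenever $\eta(0)\in\D^{m}$ the compact set $\W_{\rho}(\eta(0))^{m}$ is forward invariant, and combined with Theorem~\ref{thm:B} this confines $\eta(t)$ to an enlarged compact set $\D'^{m}$ that depends only on $\D$ and $\rho$ and still avoids the origin. Theorem~\ref{thm:R} gives $\eta(t)\to\R$ pointwise; by compactness of $\D^{m}$ together with continuous dependence on initial data, the convergence is uniform on $\D^{m}$. Hence for any $\mu>0$ there exists $T>0$ such that every averaged solution with $\eta(0)\in\D^{m}$ satisfies $\eta(t)\in\D'^{m}$ for $t\in[0,T]$ and $|\eta(T)|_{\R}\leq\mu$.

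Next I would transfer the conclusion to system~\eqref{eqn:ellc}. Applying Lemma~\ref{lem:averaging} with the compact set $\D'^{m}$, horizon $T$, and tolerance $\mu$ produces $\omega^{*}>0$ so that $\omega\geq\omega^{*}$ and $x(0)=\eta(0)\in\D^{m}$ force $|x(T)-\eta(T)|\leq\mu$, and hence $|x(T)|_{\R}\leq 2\mu$. This already handles part (b), with $r$ taken to be the diameter of $\D'^{m}$ (with respect to $\R$) plus $\mu$, and it delivers part (c) at the single time $T$. To upgrade (c) to all $t\geq T$ and to obtain (a), I would use the local stability half of Theorem~\ref{thm:R} to fix a compact, forward-invariant (for the averaged flow) neighborhood $\setU$ of $\R$ contained in the $\delta/2$-neighborhood of $\R$, shrink $\mu$ so that $x(T)$ lies strictly inside $\setU$, and then bootstrap: reapply Lemma~\ref{lem:averaging} on $[T,2T],\,[2T,3T],\,\ldots$ to show that the $O(1/\omega)$ averaging error cannot push $x(t)$ out of $\setU$ on any one interval. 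Part (a) then follows by choosing $\varepsilon$ small enough that the $\varepsilon$-neighborhood of $\R$ is itself contained in $\setU$, so the iteration can start at $t=0$.

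The main obstacle is the bootstrapping step, because Lemma~\ref{lem:averaging} by itself only guarantees approximation on a fixed finite interval. What rescues the argument is the existence of the compact neighborhood $\setU$ that is strictly invariant for the averaged flow: for $\omega$ large enough, the $O(1/\omega)$ averaging error over one period cannot carry $x(t)$ across $\partial\setU$, so the finite-horizon estimate can be iterated indefinitely without accumulation of error. Extracting such an invariant neighborhood from Theorem~\ref{thm:R}---essentially a set-valued Lyapunov or $\KL$ characterization around the compact attractor $\R$---is the technical heart of the proof, and it is precisely the ingredient that makes the set-valued extension of \cite[Thm.~2]{teel99} go through with the locally (rather than globally) asymptotically stable attractor $\R$ in place of $\B$.
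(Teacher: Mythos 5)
Your overall route is the paper's route: pair Theorem~\ref{thm:R} (together with the forward invariance of $\rho$-wedges established in its proof) with Lemma~\ref{lem:averaging}, extract a uniform attraction horizon $T$ over a compact wedge containing $\D$, match $x$ and $\eta$ on $[0,T]$, and then iterate the finite-horizon estimate on successive intervals to keep $x$ near $\R$; parts (a)--(c) are organized the same way as in the paper. (Two small economies in the paper: part (b) is obtained directly from Theorem~\ref{thm:mainB} with no averaging estimate at all, and the lemma is applied on an inflated set $\setS=\{x+\varepsilon_{1}v:x\in\W_{\rho}^{m},\,|v|\leq 1\}$ because its hypothesis constrains $x(t)$, not $\eta(t)$, so you cannot literally invoke it "with the compact set $\D'^{m}$".)

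The one step that does not survive a literal reading is your bootstrap. Forward invariance of $\setU$ under the averaged flow is not sufficient: if some restart point $x(kT)$ lies close to $\partial\setU$, the averaged solution launched from it may remain close to the boundary for the whole horizon, and the averaging error then pushes $x((k+1)T)$ outside $\setU$; over many horizons the error accumulates and nothing prevents escape. What closes the induction is a \emph{uniform inward margin at the end of each horizon}: the horizon must be chosen as an attraction time, so that every averaged solution starting anywhere in $\setU$ ends the horizon uniformly strictly inside (uniformity coming from compactness of $\setU$ and attractivity of $\R$), and the averaging tolerance must be taken below that margin. This is exactly how the paper's part (a) runs: stability of $\R$ gives $\varepsilon$ with $|\eta(t)|_{\R}\leq\delta_{2}$ for all $t$, attractivity gives $T_{1}$ with $|\eta(T_{1})|_{\R}\leq\varepsilon/2$, and the tolerance $\varepsilon_{1}\leq\varepsilon/2$ then returns $|x(kT_{1})|_{\R}\leq\varepsilon$ at every restart, so the same data can be reused on $[kT_{1},(k+1)T_{1}]$. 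Your phrase "strictly invariant $\setU$" gestures at this, but the margin has to be produced by attraction over the horizon (e.g., the $\varepsilon\to\varepsilon/2$ contraction above), not by invariance of $\setU$ alone; once you make that explicit, your argument coincides with the paper's.
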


\begin{proof}
Let us be given $(\D,\,\delta)$. Let $x(\cdot)$ and $\eta(\cdot)$
respectively denote the solutions of system~\eqref{eqn:ellc} and
system~\eqref{eqn:ellcbar}. We first work on part (a). Choose some
$\delta_{1}>0$ such that $|x|_{\R}\leq\delta_{1}$ implies
$\co\{x_{1},\,\ldots,\,x_{m}\}\cap\{0\}=\emptyset$. Then set
$\delta_{2}:=\frac{1}{2}\min\{\delta_{1},\,\delta\}$. Choose
$\varepsilon\in(0,\,\delta_{2})$ such that
\begin{eqnarray*}
|\eta(0)|_{\R}\leq\varepsilon\implies|\eta(t)|_{\R}\leq\delta_{2}\quad\forall
t\geq 0\,.
\end{eqnarray*}
Such $\varepsilon$ exists since $\R$ is locally asymptotically
stable for system~\eqref{eqn:ellcbar} by Theorem~\ref{thm:R}.
Moreover, $|\eta(0)|_{\R}\leq\varepsilon$ implies
$\co\{\eta_{1}(0),\,\ldots,\,\eta_{m}(0)\}\cap\{0\}=\emptyset$.
Therefore we can find $T_{1}>0$ such that
\begin{eqnarray*}
|\eta(0)|_{\R}\leq\varepsilon\implies|\eta(t)|_{\R}\leq\frac{\varepsilon}{2}\quad\forall
t\geq T_{1}\,.
\end{eqnarray*}
Let $\varepsilon_{1}:=\min\{\frac{\varepsilon}{2},\,\delta_{2}\}$.
By Lemma~\ref{lem:averaging} there exists $\omega_{1}>0$ such that
$|x(t)-\eta(t)|\leq\varepsilon_{1}$ for all $t\in[0,\,T_{1}]$
provided that $\omega\geq\omega_{1}$, $|x(t)|_{\R}\leq\delta$ for
all $t\in[0,\,T_{1}]$, and $x(0)=\eta(0)$. Let now
$\omega\geq\omega_{1}$ and $|x(0)|_{\R}\leq\varepsilon$. Set
$\eta(0)=x(0)$. Then for all $t\in[0,\,T_{1}]$ we can write
\begin{eqnarray*}
|x(t)|_{\R}&\leq&|\eta(t)|_{\R}+|x(t)-\eta(t)|\\
&\leq&\delta_{2}+\varepsilon_{1}\\
&\leq&\delta\,.
\end{eqnarray*}
Also
\begin{eqnarray*}
|x(T_{1})|_{\R}&\leq&|\eta(T_{1})|_{\R}+|x(T_{1})-\eta(T_{1})|\\
&\leq&\frac{\varepsilon}{2}+\varepsilon_{1}\\
&\leq&\varepsilon\,.
\end{eqnarray*}
We can repeat this procedure for the following intervals
$[kT_{1},\,(k+1)T_{1}]$, $k=1,\,2,\,\ldots$ Therefore
$|x(0)|_{\R}\leq\varepsilon$ implies $|x(t)|_{\R}\leq\delta$ as
long as $\omega\geq \omega_{1}$.

Part (b). By Theorem~\ref{thm:mainB} we can find $\omega_{2}>0$
and $r_{1}>0$ such that $\omega\geq\omega_{2}$ and $x(0)\in\D^{m}$
imply $|x(t)|_{\B}\leq r_{1}$ for all $t\geq 0$. Then we can find
$r>0$ such that $|x|_{\B}\leq r_{1}$ implies $|x|_{\R}\leq r$.
Therefore $\omega\geq\omega_{2}$ and $x(0)\in\D^{m}$ imply
$|x(t)|_{\R}\leq r$ for all $t\geq 0$.

Part (c). First recall, from the proof of Theorem~\ref{thm:mainR},
that any $\rho$-wedge is forward invariant with respect to
system~\eqref{eqn:ellcbar}. Choose a $\rho$-wedge $\W_{\rho}$ such
that $\D\subset\W_{\rho}$. Note that $\W_{\rho}$ is compact,
convex, and does not include the origin. By
Theorem~\ref{thm:mainR} there exists $T>0$ such that
\begin{eqnarray*}
\eta(0)\in\W_{\rho}^{m} \implies |\eta(T)|_{\R}\leq\varepsilon_{1}
\end{eqnarray*}
where $\varepsilon_{1}$ is as found in part (a). Then define
$\setS\subset\Real^{2m}$ as
\begin{eqnarray*}
\setS:=\{x+\varepsilon_{1}v:x\in\W_{\rho}^{m},\,|v|\leq 1\}\,.
\end{eqnarray*}
By Lemma~\ref{lem:averaging} there exists $\omega_{3}>0$ such that
$|x(t)-\eta(t)|\leq\varepsilon_{1}$ for all $t\in[0,\,T]$ provided
that $\omega\geq\omega_{3}$, $x(t)\in\setS$ for all $t\in[0,\,T]$,
and $x(0)=\eta(0)$. Let now $\omega\geq\omega_{3}$ and
$x(0)\in\D^{m}$. Set $\eta(0)=x(0)$. Then for all $t\in[0,\,T]$ we
have $|x(t)-\eta(t)|\leq\varepsilon_{1}$. Therefore
\begin{eqnarray*}
|x(T)|_{\R}&\leq&|\eta(T)|_{\R}+|x(T)-\eta(T)|\\
&\leq&\varepsilon_{1}+\varepsilon_{1}\\
&\leq&\varepsilon\,.
\end{eqnarray*}
From part (a) we know that $|x(T)|_{\R}\leq\varepsilon$ implies
$|x(t)|_{\R}\leq\delta$ for all $t\geq T$, which was to be shown.

We complete the proof by setting
$\omega^{*}:=\max\{\omega_{1},\,\omega_{2},\,\omega_{3}\}$.
\end{proof}
\\

We next present the below result for the array of coupled Lienard
oscillators. This result directly follow from the observation
mentioned in Remark~\ref{rem:arkabak} and the fact that sets $\B$
and $\R$ are invariant under rotations in $\Real^{2}$.

\begin{theorem}\label{thm:lienBnR}
Theorem~\ref{thm:mainB} and Theorem~\ref{thm:mainR} hold true for
system~\eqref{eqn:ell}.
\end{theorem}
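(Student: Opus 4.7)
The plan is to piggyback on Remark~\ref{rem:arkabak} and reduce the statement to the observation that the change of coordinates $x_i(t) = e^{-S(\omega)t}\xi_i(t)$ is a simultaneous rigid rotation of $\Real^2$ applied to every block, and that both target sets $\B$ and $\R$ are invariant under any such rotation. First I would note that viewed on $\Real^{2m}$, the transformation $\xi \mapsto x$ is effected by the block-diagonal orthogonal matrix $P(t) = \diag(e^{-S(\omega)t},\ldots,e^{-S(\omega)t})$, so it preserves every Euclidean norm on $\Real^{2m}$ and, more importantly for us, the componentwise norms $|x_i(t)| = |\xi_i(t)|$ and pairwise distances $|x_i(t) - x_j(t)| = |\xi_i(t) - \xi_j(t)|$.

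Next I would verify the claimed rotation invariance of the target sets. The set $\B = \{\eta\in\Real^{2m} : |\eta_i| \leq \rho\}$ is defined purely through the componentwise magnitudes, and $\R = \{\eta\in\Real^{2m} : \eta_i = \eta_j,\ |\eta_i| = \rho\ \text{for all}\ i,j\}$ is defined through pairwise equalities and a common magnitude; both conditions are preserved when each block $\eta_i$ is rotated by the same $2\times 2$ orthogonal matrix. Consequently $P(t)\B = \B$ and $P(t)\R = \R$ for every $t$, whence the distance functions satisfy $|\xi(t)|_\B = |P(t)^{-1}\xi(t)|_\B = |x(t)|_\B$ and, analogously, $|\xi(t)|_\R = |x(t)|_\R$.

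Finally I would transfer the conclusions. Because $P(0) = I$, we have $\xi(0) = x(0)$, so every hypothesis in Theorems~\ref{thm:mainB} and~\ref{thm:mainR} on the initial datum $x(0)$ (bounds on $|x(0)|_\B$, on $|x(0)|_\R$, or membership in $\D^m$) is literally a hypothesis on $\xi(0)$, and by the previous paragraph every conclusion on $|x(t)|_\B$ or $|x(t)|_\R$ is identical to the corresponding conclusion on $|\xi(t)|_\B$ or $|\xi(t)|_\R$. The same choice of $\omega^*$ therefore works for system~\eqref{eqn:ell}. There is essentially no obstacle here beyond bookkeeping; the only thing worth double-checking is that the rotation angle $\omega t$ is \emph{the same} on every block, which is exactly how the coordinate change was defined in Section~\ref{sec:coc}, and this common rotation is what makes $\B$ and $\R$ invariant (a per-block rotation with different angles would destroy the invariance of $\R$).
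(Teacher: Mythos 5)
Your argument is correct and is exactly the paper's own justification: the paper dispenses with a formal proof by citing Remark~\ref{rem:arkabak} together with the invariance of $\B$ and $\R$ under the common block rotation $e^{-S(\omega)t}$, which is precisely what you spell out. Your version simply makes explicit the bookkeeping ($|\xi(t)|_{\B}=|x(t)|_{\B}$, $|\xi(t)|_{\R}=|x(t)|_{\R}$, and $\xi(0)=x(0)$) that the paper leaves implicit.
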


Theorem~\ref{thm:mainB} roughly says that solutions $x_{i}(\cdot)$
of oscillators of array~\eqref{eqn:lienoscc} can be made converge
to an arbitrarily small neighborhood of the disk
$\{v\in\Real^{2}:|v|\leq\rho\}$ starting from arbitrarily large
initial conditions by choosing oscillation frequency $\omega$
arbitrarily large. Theorem~\ref{thm:mainR} says that by choosing
$\omega$ arbitrarily large, solutions $x_{i}(\cdot)$ can be made
eventually become arbitrarily close to each other and to the ring
$\{v\in\Real^{2}:|v|=\rho\}$, starting from within an arbitrary
convex compact set that does not contain the origin. Finally
Theorem~\ref{thm:lienBnR} says that these two results should hold
also for array of Lienard oscillators~\eqref{eqn:lienosc}.

Theorem~\ref{thm:lienBnR} marks the end of our answer to the
question that we asked for coupled Lienard oscillators in
Section~\ref{sec:ps}. It is hard not to realize that the basic
idea forming the skeleton of this answer serves as a solution
approach also for the problem of understanding the synchronization
behavior of coupled harmonic oscillators. Therefore in the next
section we analyze the relation between the frequency of
oscillations and the synchronization in an array of harmonic
oscillators. The results of next section will be similar to the
results of previous sections, however there will be two main
differences. The first difference is that, unlike Lienard
oscillators, where the magnitude of oscillations at
synchronization is fixed, i.e., independent of initial conditions,
with harmonic oscillators synchronization can occur at any
magnitude depending on the initial conditions. The second
difference is that the initial phases of harmonic oscillators do
not play any role in determining whether synchronization will take
place or not provided that the frequency of oscillations is large
enough. Recall that this was not the case with Lienard
oscillators.

\section{Synchronization of harmonic oscillators}\label{sec:harmonic}

Consider the following array of coupled harmonic oscillators
\begin{subeqnarray}\label{eqn:harmosc}
\dot{q}_{i}&=&\omega p_{i}\\
\dot{p}_{i}&=&-\omega q_{i}+\sum_{j\neq
i}\gamma_{ij}(p_{j}-p_{i})\,, \qquad i=1,\,\ldots,\,m
\end{subeqnarray}
where, as before, $\omega>0$ is the frequency of oscillations and
$\{\gamma_{ij}\}$ is a connected interconnection. Functions
$\gamma_{ij}$ are assumed to be locally Lipschitz. We let
$\xi_{i}=[q_{i}\ p_{i}]^{T}$ denote the state of $i$th oscillator.
Array~\eqref{eqn:harmosc} defines the below system in $\Real^{2m}$
\begin{eqnarray}\label{eqn:h}
\dot\xi=h(\xi,\,\omega)
\end{eqnarray}
where $\xi=[\xi_{1}^{T}\,\ldots\,\xi_{m}^{T}]^{T}$ and what $h$ is
should be clear. Following the same procedure adopted for Lienard
array, applying change of coordinates
$x_{i}(t)=e^{-S(\omega)t}\xi_{i}(t)$ yields
\begin{eqnarray}\label{eqn:harmoscc}
\dot{x}_{i}=\sum_{j\neq i}\left[\!\!\begin{array}{r} -\sin\omega t\\
\cos\omega t
\end{array}\!\!\right]\gamma_{ij}
([-\sin\omega t\ \cos\omega t](x_{j}-x_{i}))
\end{eqnarray}
which defines the below system in $\Real^{2m}$
\begin{eqnarray}\label{eqn:hc}
\dot{x}=\hc(x,\,\omega t)
\end{eqnarray}
for $x=[x_{1}^{T}\,\ldots\,x_{m}^{T}]^{T}$. Then, due to the
periodicity of righthand side of \eqref{eqn:harmoscc} we can talk
about the {\em average} array
\begin{eqnarray}\label{eqn:harmosccbar}
{\dot\eta}_{i}=\sum_{j\neq i} \bar\gamma_{ij}(\eta_{j}-\eta_{i})
\end{eqnarray}
where $\bar\gamma_{ij}$ is as defined in \eqref{eqn:gammaijbar}.
Array~\eqref{eqn:harmosccbar} defines the below system in
$\Real^{2m}$
\begin{eqnarray}\label{eqn:hcbar}
\dot{\eta}=\hcbar(\eta)
\end{eqnarray}
for $\eta=[\eta_{1}^{T}\,\ldots\,\eta_{m}^{T}]^{T}$.

In the remainder of the section we establish via sequence of three
theorems the relation between frequency of oscillations and
synchronization of coupled harmonic oscillators. In the first of
those results (Theorem~\ref{thm:Acbar}) we show that the
oscillators of average array~\eqref{eqn:harmosccbar} globally
synchronize. Then from our first result we deduce (in
Theorem~\ref{thm:Ac}) that solutions $x_{i}(\cdot)$ of
array~\eqref{eqn:harmoscc} should eventually become arbitrarily
close to each other, while initially being arbitrarily far from
each other, provided that frequency of oscillations is arbitrarily
large. Finally we claim (in Theorem~\ref{thm:A}) that what is true
for array~\eqref{eqn:harmoscc} is also true for
array~\eqref{eqn:harmosc} due to the norm-preserving nature of the
transformation being used to transition between two arrays.

Let us now define synchronization manifold $\A\subset\Real^{2m}$
as
\begin{eqnarray*}
\A:=\{\eta\in\Real^{2m}:\eta_{i}=\eta_{j}\ \mbox{for all}\ i,\,j\}
\end{eqnarray*}
to be used in the theorems to follow.

\begin{theorem}\label{thm:Acbar}
Consider system~\eqref{eqn:hcbar}. Synchronization manifold $\A$
is globally asymptotically stable.
\end{theorem}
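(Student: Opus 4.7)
The plan is to prove Lyapunov stability of $\A$ via a diameter-type Lyapunov function and global attractivity by LaSalle coupled with a vertex-picking argument closed by connectedness of $\{\gamma_{ij}\}$.

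\textbf{Stability and boundedness.} By Lemma~\ref{lem:keyg} each $\bar\gamma_{ij}(\eta_j-\eta_i)$ is a nonnegative scalar multiple of $\eta_j-\eta_i$. Hence for any unit vector $u\in\Real^2$ and any index $i^*$ achieving $\max_i\langle u,\eta_i\rangle$,
\begin{equation*}
\tfrac{d}{dt}\langle u,\eta_{i^*}\rangle=\sum_{j\ne i^*}\rho_{i^* j}(|\eta_j-\eta_{i^*}|)\,\frac{\langle u,\eta_j-\eta_{i^*}\rangle}{|\eta_j-\eta_{i^*}|}\le 0,
\end{equation*}
since every inner product on the right is nonpositive. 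So the support function $V_u(\eta):=\max_i\langle u,\eta_i\rangle$ is nonincreasing along \eqref{eqn:hcbar}; equivalently, the convex hull $\co\{\eta_1(t),\ldots,\eta_m(t)\}$ is forward invariant and nonincreasing in $t$. This yields boundedness and shows that $V(\eta):=\max_{i,j}|\eta_i-\eta_j|^2$, which is equivalent to $|\eta|_\A^2$ up to multiplicative constants and vanishes exactly on $\A$, is nonincreasing -- so $\A$ is Lyapunov stable.

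\textbf{LaSalle and the vertex-picking step.} Since trajectories are bounded, every $\omega$-limit set $\Omega$ is nonempty, compact, and invariant, and every flow-monotone function is constant on $\Omega$. So for each solution $\zeta(\cdot)\subseteq\Omega$ and each $u$, $V_u(\zeta(t))$ is independent of $t$; therefore $K^*:=\co\{\zeta_1(t),\ldots,\zeta_m(t)\}$ is a single convex polytope independent of $t$. I claim $K^*$ is a point. Let $p^*$ be a vertex of $K^*$; since $K^*$ is the convex hull of finitely many points, $p^*\in\{\zeta_i(t)\}_i$ at every $t$. The closed sets $T_i:=\{t:\zeta_i(t)=p^*\}$ then cover a nondegenerate interval, and Baire category yields an index $i^*$ and a time interval $[t_1,t_2]$ on which $\zeta_{i^*}\equiv p^*$, so $\dot\zeta_{i^*}\equiv 0$ there. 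Every summand of $\sum_{j\ne i^*}\bar\gamma_{i^* j}(\zeta_j-p^*)=0$ is a nonnegative multiple of $\zeta_j-p^*\in K^*-p^*$ and therefore lies in the tangent cone of $K^*$ at the vertex $p^*$, which is pointed. Pointedness forces each summand to vanish, and Claim~\ref{clm:rhoij} then gives $\zeta_j\equiv p^*$ on $[t_1,t_2]$ for every out-neighbour $j$ of $i^*$ (every $j$ with $\gamma_{i^* j}\ne 0$). Iterating down chains of out-edges from $i^*$, every node reachable from $i^*$ in the graph of $\{\gamma_{ij}\}$ equals $p^*$ on $[t_1,t_2]$; connectedness places the root $l$ among them, so $\zeta_l\equiv p^*$ on $[t_1,t_2]$.

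\textbf{Closing and main obstacle.} If $K^*$ had a second vertex $p^{**}\ne p^*$, reapplying Baire to $p^{**}$ on $[t_1,t_2]$ would produce $i^{**}$ and $[t_1',t_2']\subseteq[t_1,t_2]$ with $\zeta_{i^{**}}\equiv p^{**}$, and the identical iteration would force $\zeta_l\equiv p^{**}$ on $[t_1',t_2']$, contradicting $\zeta_l\equiv p^*$. Hence $K^*$ has a single vertex and so $K^*=\{p^*\}$, giving $\Omega\subseteq\A$ and global attractivity. The main obstacle I expect is the invariance step: the natural Lyapunov candidates (diameter, smallest enclosing disk) decrease only weakly, so one has to extract information from their being \emph{constant} on $\Omega$; the combination of the pointed-cone conclusion at a vertex and the connectedness-supplied root node, reachable from every index by a chain of out-edges, is exactly what is needed to promote the local equality $\zeta_{i^*}=p^*$ to a global one and thereby rule out a second vertex of $K^*$.
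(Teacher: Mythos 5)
Your proof is correct, but it takes a genuinely different route from the paper. The paper's own proof is short and citation-based: after writing $\dot\eta_i=\sum_{j\neq i}\rho_{ij}(|\eta_j-\eta_i|)(\eta_j-\eta_i)/|\eta_j-\eta_i|$ via Lemma~\ref{lem:keyg}, it observes that each $\hcbar_i(\eta)$ points into the relative interior of the convex hull of $\eta_i$ and its out-neighbours, invokes the state-agreement result \cite[Cor.~3.9]{lin07} for connected graphs, and then uses translation invariance to get a class-$\K$ bound plus uniform attractivity, concluding GAS via \cite[Prop.~1]{teel00}. You instead give a self-contained argument: monotonicity of every support function $V_u(\eta)=\max_i\langle u,\eta_i\rangle$ yields nested convex hulls, boundedness, and the class-$\K$ stability estimate; then LaSalle-type constancy of all $V_u$ on the $\omega$-limit set makes the hull a fixed polytope $K^*$, and your Baire-category vertex-pinning step combined with pointedness of the tangent cone at a vertex and propagation along out-edges to the globally reachable root node forces $K^*$ to be a singleton, ruling out a second vertex. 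The steps all check out: the pointed-cone argument correctly kills each summand, Claim~\ref{clm:rhoij} upgrades $\bar\gamma_{i^*j}(\zeta_j-p^*)=0$ to $\zeta_j=p^*$ for out-neighbours, and your edge orientation matches the paper's convention ($\gamma_{ij}\neq 0$ gives an edge from $n_i$ to $n_j$, and connectedness supplies a node reachable from every other along out-edges). What each approach buys: the paper's route is shorter and directly delivers the uniform (in initial sets) attractivity bounds that feed the averaging machinery later, while yours is elementary and avoids the consensus literature entirely; the only caveat is that your LaSalle argument gives pointwise rather than uniform attractivity, which is harmless here because your support-function estimate already gives the class-$\K$ stability bound and, for the translation-reduced compact attractor used in the proof of Theorem~\ref{thm:Ac}, stability plus global attractivity is equivalent to the uniform notion. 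Minor informalities (nonsmooth derivative of the max, stating $\Omega\subseteq\A$ from one solution rather than noting the support-function constants are common to all of $\Omega$) are easily repaired.
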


\begin{proof}
By Lemma~\ref{lem:keyg} we can write
\begin{eqnarray*}
\dot\eta_{i}=\sum_{j\neq i} \rho_{ij}
(|\eta_{j}-\eta_{i}|)\frac{\eta_{j}-\eta_{i}}{|\eta_{j}-\eta_{i}|}=:\hcbar_{i}(\eta)
\end{eqnarray*}
for $i=1,\,\ldots,\,m$. Let $\G$ denote the graph of
interconnection $\{\gamma_{ij}\}$. Graph $\G$ is connected by
assumption. Note that $\rho_{ij}$ is continuous and zero at zero.
Also, by Claim~\ref{clm:rhoij}, if there is no edge of $\G$ from
node $i$ to node $j$ then $\rho_{ij}(s)\equiv 0$. If there is an
edge from node $i$ to node $j$ then $\rho_{ij}(s)>0$ for $s>0$.

Therefore $\hcbar_{i}$ is continuous; and vector
$\hcbar_{i}(\eta)$ always points to the (relative) interior of the
convex hull of the set $\{\eta_{i}\}\cup\{\eta_{j}:\mbox{there is
an edge of $\G$ from node $i$}$ $\mbox{to node $j$}\}$. These two
conditions together with connectedness of $\G$ yield by
\cite[Cor.~3.9]{lin07} that system~\eqref{eqn:ellcbar} has the
{\em globally asymptotic state agreement property}, see
\cite[Def.~3.4]{lin07}. Another property of the system is {\em
invariance with respect to translations}. That is,
$\hcbar(\eta+\zeta)=\hcbar(\eta)$ for $\zeta\in\A$. These
properties let us write the following.
\begin{itemize}
\item[(a)] There exists $\alpha\in\K$ such that
$|\eta(t)|_{\A}\leq\alpha(|\eta(0)|_{\A})$ for all $t\geq 0$.
\item[(b)] For each $r>0$ and $\varepsilon>0$, there exists $T>0$
such that $|\eta(0)|_{\A}\leq r$ implies
$|\eta(t)|_{\A}\leq\varepsilon$ for all $t\geq T$.
\end{itemize}
Finally, (a) and (b) give us the result by \cite[Prop.~1]{teel00}.
\end{proof}

\begin{theorem}\label{thm:Ac}
Consider system~\eqref{eqn:hc}. Synchronization manifold $\A$ is
semiglobally practically asymptotically stable.
\end{theorem}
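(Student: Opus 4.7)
My plan is to reduce \eqref{eqn:hc} to a time-varying system whose attractor is a single (hence compact) point, and then invoke the same averaging result already used in the proof of Theorem~\ref{thm:mainB}. The reduction rests on the translation invariance of $\hc$ along the synchronization manifold: inspection of \eqref{eqn:harmoscc} shows that $\hc(x,\,\omega t)$ depends on $x$ only through the differences $x_{j}-x_{i}$, so
\begin{eqnarray*}
\hc(x+\zeta,\,\omega t)=\hc(x,\,\omega t)\qquad\forall\zeta\in\A\,,
\end{eqnarray*}
and the same invariance holds for $\hcbar$.

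Next I would let $P$ denote orthogonal projection onto $\A^{\perp}$ and set $y:=Px$. Writing $x=Px+(I-P)x$ with $(I-P)x\in\A$, translation invariance gives $\hc(x,\,\omega t)=\hc(Px,\,\omega t)$, so
\begin{eqnarray*}
\dot y=P\hc(y,\,\omega t)=:\tilde\hc(y,\,\omega t)
\end{eqnarray*}
defines a bona fide differential equation on $\A^{\perp}$ that is locally Lipschitz in $y$ and $2\pi/\omega$-periodic in $t$, with time-average $\tilde\hcbar(y):=P\hcbar(y)$. Since $|x|_{\A}=|Px|=|y|$, Theorem~\ref{thm:Acbar} is equivalent to global asymptotic stability of the origin of $\A^{\perp}$ for $\dot y=\tilde\hcbar(y)$.

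Applying to $\dot y=\tilde\hc(y,\,\omega t)$ the extension of \cite[Thm.~2]{teel99} invoked before Theorem~\ref{thm:mainB} (the attractor $\{0\}$ being compact), one concludes that $\{0\}\subset\A^{\perp}$ is semiglobally practically asymptotically stable with respect to $\omega$. Because conditions (a)--(c) of the definition are stated purely in terms of $|x(t)|_{\A}=|y(t)|$, and because the trajectory $y(t)=Px(t)$ of \eqref{eqn:hc} satisfies $\dot y=\tilde\hc(y,\,\omega t)$ exactly, these three conditions lift verbatim from the origin in $\A^{\perp}$ to $\A$ for system~\eqref{eqn:hc}. The only delicate step I foresee is the justification of the reduction itself --- i.e.\ that $\tilde\hc$ inherits periodicity and local Lipschitz continuity from $\hc$ and that the passage to $\A^{\perp}$ genuinely converts the unbounded attractor $\A$ into the compact singleton $\{0\}$ --- but each of these follows immediately from translation invariance of $\hc$ and from linearity (hence continuity) of $P$.
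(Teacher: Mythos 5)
Your proof is correct and follows essentially the same route as the paper: exploit the fact that the righthand side of \eqref{eqn:harmoscc} depends only on the differences $x_{j}-x_{i}$ (translation invariance along $\A$) to reduce to a lower-dimensional time-varying system whose averaged dynamics have a globally asymptotically stable origin by Theorem~\ref{thm:Acbar}, then apply \cite[Thm.~2]{teel99} and lift the conclusion back to $\A$. The only difference is cosmetic: the paper reduces via the stacked differences $x_{i}-x_{1}$, whereas you project orthogonally onto $\A^{\perp}$, which has the minor advantage that $|y|=|x|_{\A}$ holds exactly, making the final transfer of conditions (a)--(c) immediate rather than requiring a norm-equivalence remark.
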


\begin{proof}
Consider array~\eqref{eqn:harmoscc}. Observe that the righthand
side depends only on the relative distances $x_{j}-x_{i}$. This
allows us to reduce the order of the system. Let us define
\begin{eqnarray*}
y:=\left[\!\!
\begin{array}{c}
x_{2}-x_{1}\\
x_{3}-x_{1}\\
\vdots\\
x_{m}-x_{1}
\end{array}
\!\!\right]
\end{eqnarray*}
Note that $\dot y=h^{\rm r}(y,\,\omega t)$ for some $h^{\rm
r}:\Real^{2m-2} \times\Real_{\geq 0}\to \Real^{2m-2}$. Since
functions $\gamma_{ij}$ are assumed to be locally Lipschitz,
$h^{\rm r}$ is locally Lipschitz in $y$ uniformly in $t$. Also,
$h^{\rm r}$ is periodic in time by \eqref{eqn:harmoscc}. Now
consider array~\eqref{eqn:harmosccbar}. Again the righthand side
depends only on the relative distances $\eta_{j}-\eta_{i}$. Define
\begin{eqnarray*}
z:=\left[\!\!
\begin{array}{c}
\eta_{2}-\eta_{1}\\
\eta_{3}-\eta_{1}\\
\vdots\\
\eta_{m}-\eta_{1}
\end{array}
\!\!\right]
\end{eqnarray*}
Then $\dot z=\bar h^{\rm r}(z)$ where $\bar h^{\rm r}$ is the time
average of $h^{\rm r}$ and locally Lipschitz both due to that
$\bar\gamma_{ij}$ is the time average of $\gamma_{ij}$.

Theorem~\ref{thm:Acbar} implies that the origin of $\dot z=\bar
h^{\rm r}(z)$ is globally asymptotically stable. Then
\cite[Thm.~2]{teel99} tells us that the origin of $\dot y=h^{\rm
r}(y,\,\omega t)$ is semiglobally practically asymptotically
stable. All there is left to complete the proof is to realize that
semiglobal practical asymptotic stability of the origin of $\dot
y=h^{\rm r}(y,\,\omega t)$ is equivalent to semiglobal practical
asymptotic stability of synchronization manifold $\A$ of
system~\eqref{eqn:hc}.
\end{proof}
\\

Recall that system~\eqref{eqn:hc} is obtained from
system~\eqref{eqn:h} by a time-varying change of coordinates that
is a rotation in $\Real^{2}$. Since rotation is a norm-preserving
operation and set $\A$ is invariant under rotations,
Theorem~\ref{thm:Ac} yields the below result.

\begin{theorem}\label{thm:A}
Consider harmonic oscillators~\eqref{eqn:h}. Synchronization
manifold $\A$ is semiglobally practically asymptotically stable.
\end{theorem}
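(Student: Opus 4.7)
The plan is to deduce Theorem~\ref{thm:A} from Theorem~\ref{thm:Ac} by exploiting the fact that $\xi$ and $x$ are related through a block-diagonal rotation in $\Real^{2m}$, an operation that preserves both Euclidean norms and the synchronization manifold $\A$. The substantive dynamical content has already been established in Theorem~\ref{thm:Ac}; what remains is essentially a bookkeeping argument showing that the semiglobal practical asymptotic stability property transfers verbatim between the two coordinate representations.

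First I would write out the change of coordinates explicitly at the level of the full state. Setting $R(t) := \diag(e^{-S(\omega)t},\ldots,e^{-S(\omega)t}) \in \Real^{2m\times 2m}$, the relation $x_i(t) = e^{-S(\omega)t}\xi_i(t)$ for each $i$ gives $x(t) = R(t)\xi(t)$. Since $e^{-S(\omega)t}$ is a planar rotation, $R(t)$ is orthogonal for every $t$, so $|R(t)v| = |v|$ for all $v \in \Real^{2m}$. Next I would observe that $R(t)\A = \A$, because $\eta \in \A$ means $\eta_1 = \cdots = \eta_m$, and applying the same rotation $e^{-S(\omega)t}$ to each block preserves equality of components. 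From orthogonality of $R(t)$ and invariance of $\A$, I would conclude the key identity
\begin{equation*}
|x(t)|_{\A} = |R(t)\xi(t)|_{R(t)\A} = |\xi(t)|_{\A} \qquad \forall t\geq 0.
\end{equation*}

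With this identity in hand, the three bullet conditions (a), (b), (c) in the definition of semiglobal practical asymptotic stability for system~\eqref{eqn:h} become identical, term for term, to those for system~\eqref{eqn:hc}: given a pair $(\Delta,\delta)$, pick the same $\omega^{*}$ supplied by Theorem~\ref{thm:Ac}; then for $\omega \geq \omega^{*}$, any $\varepsilon$, $r$, $T$ that work for $x$ also work for $\xi$, because the hypothesis $|\xi(t_0)|_{\A} \leq \varepsilon$ (resp.\ $\leq r$) and the conclusion $|\xi(t)|_{\A} \leq r$ (resp.\ $\leq \varepsilon$) both translate bijectively into the corresponding statements about $x(\cdot)$ via the identity above.

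I do not anticipate any real obstacle here; the only small point requiring care is to make sure that the time-varying rotation $R(t)$ does indeed preserve $\A$ at every instant (not merely at $t=0$), which is what permits the distance identity to hold pointwise in $t$ rather than only for the initial value. Once that is noted, the proof reduces to a one-line invocation of Theorem~\ref{thm:Ac}.
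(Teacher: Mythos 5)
Your proposal is correct and follows exactly the route the paper takes: the paper justifies Theorem~\ref{thm:A} by the remark preceding it, namely that the change of coordinates $x_i(t)=e^{-S(\omega)t}\xi_i(t)$ is a norm-preserving rotation under which $\A$ is invariant, so the distance $|\cdot|_{\A}$ is preserved and Theorem~\ref{thm:Ac} transfers directly. Your write-up merely makes explicit (via the block-diagonal orthogonal matrix $R(t)$ and the identity $|x(t)|_{\A}=|\xi(t)|_{\A}$) what the paper leaves as a one-line observation.
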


\section{Conclusion}\label{sec:conclude}

We have shown that nonlinearly-coupled Lienard-type oscillators
(almost) synchronize provided that their phases initially lie in a
semicircle and the frequency of oscillations is high enough. We
have generated the same result for nonlinearly-coupled harmonic
oscillators without any requirement on their initial phases. We
have employed averaging techniques to establish our main theorems.

\bibliographystyle{plain}
\bibliography{references}

\end{document}